\theoremstyle{plain}
\newtheorem{theorem}{Theorem}[section]
\newtheorem{proposition}[theorem]{Proposition}
\theoremstyle{definition}
\newtheorem{definition}[theorem]{Definition}
\newtheorem{example}[theorem]{Example}
\newtheorem{remark}[theorem]{Remark}
\newtheorem{remarks}[theorem]{Remarks}
\title[From planar to annular to toroidal pseudo bracket polynomials]
 {From planar to annular to toroidal \\ bracket polynomials for pseudo knots and links}
\author{Ioannis Diamantis}
\address{Department of Data Analytics and Digitalisation,
Maastricht University, School of Business and Economics,
P.O.Box 616, 6200 MD, Maastricht,
The Netherlands.}
\email{i.diamantis@maastrichtuniversity.nl}
\author{Sofia Lambropoulou}
\address{School of Applied Mathematical and Physical Sciences, National Technical University of Athens, Zografou campus, GR-15780 Athens, Greece.}
\email{sofia@math.ntua.gr}
\urladdr{http://www.math.ntua.gr/~sofia}
\author{Sonia Mahmoudi}
\address{Advanced Institute for Materials Research, Tohoku University, 2-1-1 Katahira, Aoba-ku, Sendai 980-8577, Japan; RIKEN iTHEMS, 2-1 Hirosawa, Wako, Saitama 351-0198, Japan}
\email{sonia.mahmoudi@tohoku.ac.jp}
\subjclass[2020]{57K10, 57K12, 57K14, 57K31,  57K35} 
\keywords{pseudo knots, pseudo links, annular pseudo links, pseudo links in solid torus, toroidal pseudo links, pseudo links in thickened torus, pseudo link regular isotopy, pseudo link isotopy, mixed links, pseudo Reidemeister moves,  pseudo bracket polynomial, pseudo Jones polynomial}
\date{}
\begin{document}

\setcounter{section}{-1}

\begin{abstract}
Pseudo links are equivalence classes under Reidemeister-type moves of link diagrams containing crossings with undefined over and under information. In this paper, we extend the Kauffman bracket and Jones-type polynomials from planar pseudo links to annular and toroidal pseudo links and their respective lifts from the three-space to the solid torus and the thickened torus. Moreover, since annular and toroidal pseudo links can be represented as mixed links in the three-sphere, we also introduce the respective Kauffman bracket and Jones-type polynomials for their planar mixed link diagrams. Our work provides new tools for the study of annular and toroidal pseudo links. 
\end{abstract}

\maketitle


\section{Introduction}\label{sec:0}

The study of pseudo knots was introduced by Hanaki in \cite{H} to model complex knotted structures like DNA knots, where advanced microscopy cannot detect the over/under information at certain crossings. This diagrammatic theory extends classical knot theory by providing a framework to study knots and links with undefined entanglements that arise in biological and physical systems.

A \textit{pseudo knot or link diagram} is defined as a planar knot or link diagram containing also double points missing over/under information, called \textit{precrossings}. These diagrams are classified under generalized Reidemeister moves (see Figure~\ref{reid}) that take into account the indeterminate nature of the precrossings. The equivalence class of a pseudo link diagram is called a \textit{pseudo link} (see \cites{DLM3, HJMR} for more details). As presented in our previous work \cite{DLM3}, planar, annular and toroidal pseudo link diagrams can lift to spatial pseudo links in the three-sphere or three-space, pseudo links in the solid torus, and pseudo links in the thickened torus, respectively. This allows us to translate equivalence of pseudo links diagrams into isotopy of their corresponding lifts. It also allows us to represent pseudo links in the solid torus and in the thickened torus by spatial pseudo mixed links. Spatial pseudo (mixed) links can be viewed as `knotted' embedded graphs in three-space, where the four-valent vertices may be interpreted as precrossings, and whose equivalence  and invariants were  studied in \cite{K3}. 

In this paper, we introduce the \textit{annular and toroidal pseudo bracket polynomials}, which extend the Kauffman bracket polynomial from classical knot theory (\cite{Kauffman}) to annular and toroidal pseudo links. A bracket polynomial for planar pseudo links was previously defined as a 2-variable Laurent polynomial in \cite{HD}, which was then extended to a 3-variable Laurent polynomial for annular pseudo links in \cite{D}.

Here, we first introduce the notion of {\it regular isotopy} for  planar, annular and toroidal pseudo link diagrams.  Regular isotopy in these diagrammatic settings extends regular isotopy of classical knot and link diagrams, as the invariance under planar isotopy moves, R0, and the Reidemeister moves  R2 and R3 \cite{Kauffman}, by incorporating the additional pseudo Reidemeister moves PR2 and PR3 (see Figure~\ref{reid}). 
We then define the planar, (universal) annular, (universal, reduced)  toroidal pseudo bracket polynomials, as $n$-variable Laurent polynomials, with $n=3$ (generalizing the one of \cite{HD}), $n=4$ (generalizing the one of \cite{D}), $n=5$ and $n=\infty$, respectively (Definitions~\ref{pkaufb},~\ref{pkaufbst} and~\ref{kbkntt}), which we prove to be invariants of regular isotopy for the corresponding categories of pseudo links in Propositions~\ref{regular_invariance} and ~\ref{annular_regular_invariance}, and Theorem~\ref{th:regular_invariance_tor}. 
By further normalizing the regular isotopy under the Reidemeister move R1 and the pseudo Reidemeister move PR1, we obtain the corresponding Jones-type invariants for planar (recovering the one in \cite{HD}), annular and toroidal pseudo links, as well as their three-dimensional lifts  in the three-sphere, the solid torus, and the thickened torus, as stated in Theorems~\ref{th:pl-bracket}, ~\ref{th:an-bracket}, and ~\ref{th:tor-bracket}. We recapitulate our results in the following theorems.\\

\noindent {\bf Theorem 1.} 
\textit{The (universal) annular pseudo bracket polynomial, resp. the normalized annular pseudo bracket polynomial, is a regular isotopy invariant, resp. an isotopy invariant, of annular pseudo links and their corresponding pseudo links in the solid torus.}\\

\noindent {\bf Theorem 2.} 
\textit{The  (universal, reduced) toroidal pseudo bracket polynomial, resp. the normalized (universal, reduced) toroidal pseudo bracket polynomial, is a regular isotopy invariant, resp. an isotopy invariant, of toroidal pseudo links and their corresponding pseudo links in the thickened torus.}

\smallbreak
Furthermore, the definitions of pseudo links in the solid and in the thickened torus, as lifts of annular and toroidal pseudo links, also allow us to translate the theories of annular and toroidal pseudo links into the theories of ${\rm O}$-mixed pseudo links and ${\rm H}$-mixed pseudo links in the three-sphere $S^3$, respectively, cf. \cite{DLM3}. More precisely, an {\it ${\rm O}$-mixed pseudo link}  is a spatial pseudo link in $S^3$ that contains a point-wise fixed unknotted component representing the complementary solid torus (see also \cite{D}). Further, an {\it ${\rm H}$-mixed pseudo link} is a spatial pseudo link in $S^3$ that contains a point-wise fixed Hopf link as a sublink, whose complement is a thickened torus. On the diagrammatic level, these imply that annular and toroidal pseudo links can be viewed as planar ${\rm O}$-mixed and ${\rm H}$-mixed pseudo link diagrams. 

We thus introduce the 4-variable {\it mixed bracket polynomial for planar ${\rm O}$-mixed pseudo links} (Definition~\ref{mix-pkaufbst}) and its universal analogue, which we prove to be  invariants under regular isotopy of ${\rm O}$-mixed pseudo links, equivalent to the (universal)  annular pseudo bracket polynomial (Theorem~\ref{th:an-mix-in}). Furthermore, we introduce the infinite variable  {\it (universal) mixed bracket polynomial for planar ${\rm H}$-mixed pseudo links}  (Definition~\ref{mix-pkaufbtt}), and a reduced 5-variable version (Definition~\ref{def:reduced-H-bracket}), which we prove to be invariants under regular isotopy of ${\rm H}$-mixed pseudo links, equivalent to the (universal) toroidal  and the reduced toroidal pseudo bracket polynomials (Theorem~\ref{th:th-mix-in}). Finally, in analogy to the annular and toroidal pseudo links, we normalize the (universal) mixed pseudo link polynomials under the moves R1 and PR1, and thus obtain Jones-type invariants for annular and toroidal pseudo links through the theory of mixed links (Theorems ~\ref{th:an-bracket} and ~\ref{th:tor-bracket}).\\

\noindent {\bf Theorem 3.} 
\textit{The ${\rm O}$-mixed pseudo bracket polynomial, resp. the normalized ${\rm O}$-mixed pseudo bracket polynomial, is a regular isotopy invariant, resp. an isotopy invariant, of ${\rm O}$-mixed pseudo links and their corresponding spatial ${\rm O}$-mixed pseudo links.}\\

\noindent {\bf Theorem 4.} 
\textit{The ${\rm H}$-mixed pseudo bracket polynomial, resp. the normalized ${\rm H}$-mixed pseudo bracket polynomial, is a regular isotopy invariant, resp. an isotopy invariant, of ${\rm H}$-mixed pseudo links and their corresponding spatial ${\rm O}$-mixed pseudo links.}\\

The study of annular and toroidal pseudo knots is driven not only by their classification, but also by their connections to periodic pseudo tangles, which are defined as their universal covers in a thickened ribbon and in the thickened plane respectively, and their potential applications. For a detailed exploration of pseudo doubly periodic tangles, we refer the reader to \cite{DLM4}.

\smallbreak

The paper is organized as follows. In \S~\ref{sec:planar} we recall the basic notions associated with planar pseudo links, including the pseudo Reidemeister equivalence, the lift in three-dimensional space and the planar bracket and Jones polynomials, after introducing the notion of regular isotopy for pseudo links. In \S~\ref{sec:annular} we recall the theory of annular pseudo links, that is, their  pseudo Reidemeister equivalence and their lifts in the solid torus. We also recall their representation as planar ${\rm O}$-mixed pseudo links and their equivalence relation. We then extend the bracket polynomial for annular pseudo links and planar ${\rm O}$-mixed pseudo links, proving their invariance under regular isotopy, and we normalize them into Jones-type polynomials. In \S~\ref{sec:toroidal} we proceed analogously by first recalling the theory of toroidal pseudo links, including their  pseudo Reidemeister equivalence and their lifts in the thickened torus. We also recall their representation as planar ${\rm H}$-mixed pseudo links and their equivalence relation. We then define the bracket polynomials for toroidal pseudo links and planar ${\rm H}$-mixed pseudo links, and we prove their invariance under regular isotopy. Finally, we normalize these polynomials into Jones-type polynomials, defining new invariants for classifying toroidal pseudo links.

\section{Planar Pseudo Links}\label{sec:planar}

The classical by now theory of pseudo knots and links is a diagrammatic theory introduced by Hanaki in \cite{H} and the mathematical background of the theory was established in \cite{HJMR}. In this section we start with a summary of the main concepts of planar pseudo links, as presented in \cite{DLM3}, including their lifts in three-space and their equivalence relation through the generalized Reidemeister theorem. We then recall key results on well-known polynomial invariants of planar pseudo links, namely the planar pseudo bracket and Jones polynomials, which will serve as a basis for the study of annular and toroidal pseudo links in the next sections.

\subsection{Preliminaries}

A {\it planar pseudo link diagram} is defined as a regular link diagram in the plane containing a number of classical crossings as well as crossings in which the crossing information is missing, that is, it is unknown which arc passes over and which arc passes under the other.  These undetermined crossings, called {\it precrossings} or {\it pseudo crossings}, are marked by transversal intersections of arcs of the diagram enclosed in a light gray circle, as illustrated in the example in Figure~\ref{pk1}. Assigning an orientation to each component of a pseudo link diagram results in an {\it oriented} pseudo link diagram. 

\begin{figure}[H]
\begin{center}
\includegraphics[width=0.8in]{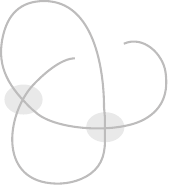}
\end{center}
\caption{A pseudo knot.}
\label{pk1}
\end{figure}

We consider planar pseudo link diagrams under the discrete equivalence relation, called {\it isotopy}, induced by planar isotopy moves, denoted R0, the classical Reidemeister moves R1, R2, R3 and the {\it pseudo Reidemeister moves} PR1, PR2, PR3, as exemplified in Figure~\ref{reid}, with all their relevant variants. The equivalence class of a planar pseudo link diagram is a {\it planar pseudo link}. 

We also consider planar pseudo link diagrams under a more restricted equivalence relation, which extends the equivalence of regular isotopy introduced by Kauffman \cite{Kauffman}, as follows:

\begin{definition}\label{def:pseudo-regular}
{\it Regular isotopy for pseudo links} is the equivalence relation among pseudo link diagrams, induced by the moves R0, R2, R3, PR2, and PR3. 
\end{definition}

When considering either equivalence class of an oriented pseudo link diagram, the oriented versions of the respective moves are needed to preserve the orientation. 

\begin{figure}[H]
\begin{center}
\includegraphics[width=4.7in]{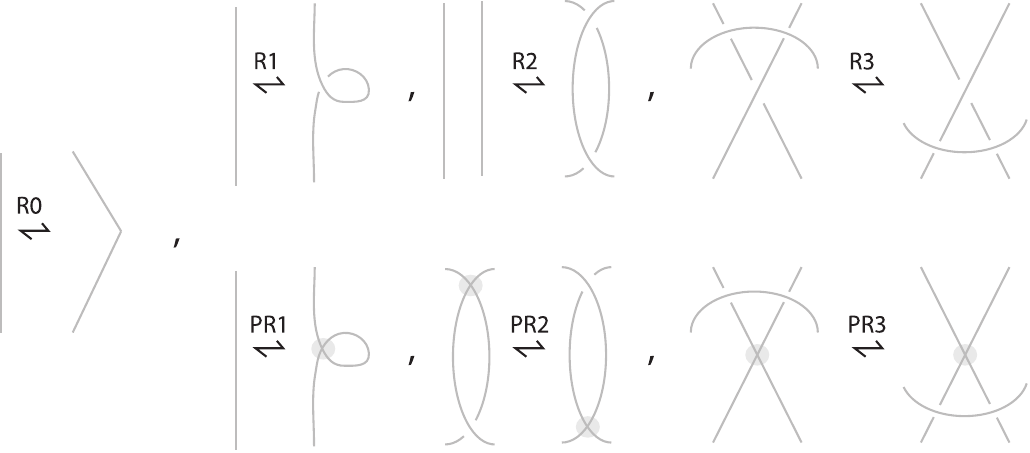}
\end{center}
\caption{Reidemeister equivalence moves for pseudo link diagrams.}
\label{reid}
\end{figure}

\begin{remark}
If $K$ is a pseudo link diagram without precrossings, then $K$ can be viewed as a classical link diagram and its equivalence class under the classical moves R0, R1, R2, R3 is preserved by the above pseudo isotopy. In this sense, there is an injection of the classical link types into the pseudo link types. 
\end{remark}

We now recall from \cite{DLM3} the definition of the lift of a pseudo link diagram. 

\begin{definition} \label{spatiallift} 
The {\it lift} of a planar pseudo link diagram in the three-dimensional space is a collection of closed curve(s) defined such that every classical crossing is embedded in a sufficiently small 3-ball, while every precrossing is supported by a sufficiently small rigid disc embedded in three-space. The simple arcs connecting crossings can also be replaced by isotopic ones embedded in three-space. We call the resulting lift a {\it spatial pseudo link}.
\end{definition}

\begin{figure}[H]
\begin{center}
\includegraphics[width=2.3in]{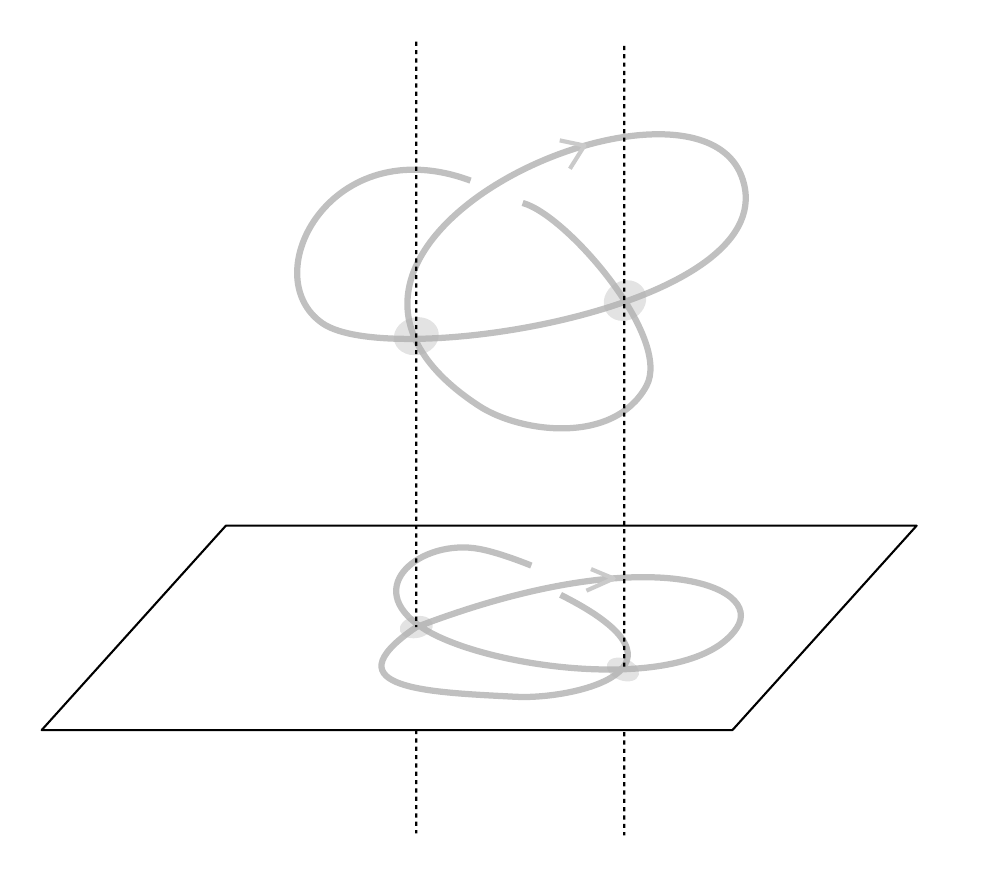}
\end{center}
\caption{The lift of a pseudo knot diagram to a spatial pseudo knot (cf. \cite{DLM3}).}
\label{trefoil-lifting}
\end{figure}

The inverse operation of the lift is the regular projection of a spatial pseudo link on a specified plane, whereby a disc supporting a pseudo crossing does not project on an arc, and this is a planar pseudo link diagram. 

Recall now that two (oriented) spatial pseudo links are said to be {\it (ambient) isotopic} if they are related by arc and disc isotopies (cf. \cite{DLM3}). Then, a generalization of the Reidemeister theorem for spatial pseudo links is stated as follows. {\it Two (oriented) spatial pseudo links are (ambient) isotopic if and only if any two corresponding planar pseudo link diagrams of theirs are (oriented) pseudo isotopic.}

\subsection{The classical bracket polynomial}

The bracket polynomial, denoted by $\langle \cdot \rangle$, was introduced by Kauffman in \cite{Kauffman} as a one-variable Laurent polynomial in the ring $ \mathbb{Z}[A^{\pm 1}]$ defined recursively by means of the following rules: 

\begin{enumerate} 
  \item $\langle {\rm O} \rangle = 1$, with O denoting the standard unknot
  \smallbreak
  \item $\langle D \cup {\rm O} \rangle = (- A^2 - A^{-2}) \langle D \rangle$
  \smallbreak
  \item $\langle$ \raisebox{-3pt}{\includegraphics[scale=0.45]{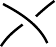}} $\rangle$ = $A$ $\langle$  \raisebox{-0pt}{\includegraphics[scale=0.45]{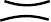}} $\rangle$ $+$ $A^{-1}$ $\langle$  \raisebox{-2pt}{\includegraphics[scale=0.6]{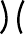}} $\rangle$, for diagrams that differ locally around a single crossing.  
\end{enumerate}

\smallbreak
The bracket polynomial is an invariant of regular isotopy for classical (unoriented) links, meaning that it remains unchanged under equivalence induced by the R0, R2 and R3 moves. To obtain an ambient isotopy invariant for classical knots and links, one needs to normalize the bracket polynomial so as to also respect move R1. This normalized version of the bracket polynomial is equivalent to the {\it Jones polynomial} of a link. The normalization process involves assigning an orientation to each component of a link $L$, which in turn assigns $+1$ or $-1$ to each crossing of a given diagram, say $D$, of $L$, as shown in the convention in Figure~\ref{sign}.

\begin{figure}[H]
\begin{center}
\includegraphics[width=1.4in]{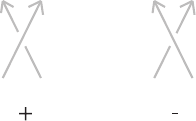}
\end{center}
\vspace{8pt}
\caption{The signs of the crossings.}
\label{sign}
\end{figure} 

Further, the {\it writhe} of the diagram $D$ is defined as the sum of the signs of the crossings in the diagram. More formally, if $C$ is the set of crossings in $D$, the writhe $w(D)$ is given by:

\begin{equation}\label{writhe}
w(D) = \sum_{c \in C} \text{sgn}(c),
\end{equation}

\noindent where $\text{sgn}(c)$ denotes the sign of crossing $c$. Note that the writhe itself is an invariant of regular isotopy. Then, the polynomial $V_L \in \mathbb{Z}[A^{\pm 1}]$ is defined via the bracket polynomial $\langle L \rangle$ of the link $L$, as follows, and it is an ambient isotopy invariant of $L$ (cf. \cite{Kauffman}):

\[
V_L(A) = (-A)^{-3 w(D)} \langle L \rangle.
\]

\noindent Further, applying the variable substitution $A=t^{-1/4}$ in the polynomial $V_L(A)$ one obtains the well-known Jones polynomial.

\subsection{The planar pseudo bracket polynomial}

In \cite{HD} the 2-variable pseudo bracket polynomial is defined for planar pseudo link diagrams, extending the Kauffman bracket polynomial $\langle \cdot \rangle$  for classical knots and links \cite{Kauffman}. This extension was achieved by considering {\it oriented} pseudo link diagrams, which allowed the definition of a skein relation at a precrossing. Note that this skein relation can be viewed as an operation where specific tangles are inserted to replace the precrossings. See \cite{AHLK} for the application of tangle insertion to pseudo knots. In the setting of \cite{HD}, which aimed at an ambient isotopy invariant, the pseudo bracket polynomial was forced to satisfy the move PR1. However, the move PR1 is not included in our definition of regular isotopy for pseudo links (Definition~\ref{def:pseudo-regular}), so we define the pseudo bracket polynomial  as a 3-variable polynomial as follows:

\begin{definition}\label{pkaufb}\rm
Let $K$ be an oriented planar pseudo link diagram. The {\it planar pseudo bracket polynomial} of $K$, denoted $\langle K \rangle$, is a Laurent polynomial in the ring $\mathbb{Z}\left[A^{\pm 1}, V, H \right]$ that is defined  by means of the following recursive rules:
\begin{figure}[H]
\begin{center}
\includegraphics[width=3.3in]{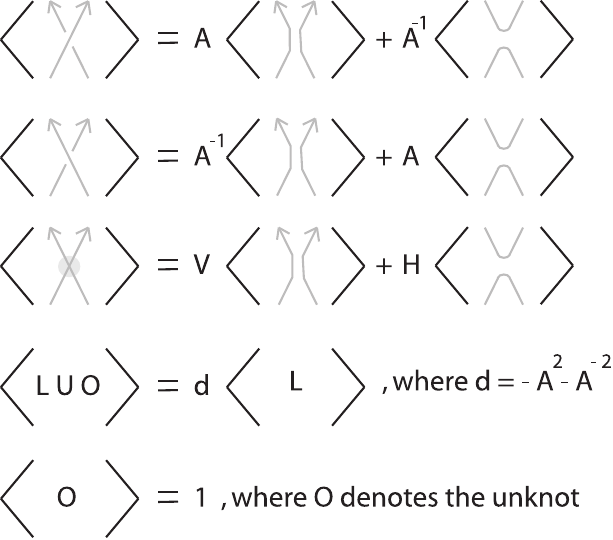}
\end{center}
\label{pkb}
\end{figure}
\end{definition}

\begin{remark} 
Note that the first two rules can be replaced by the well-known one rule of the classical bracket polynomial with unoriented arcs (rule (3) above). It follows that, in the absence of precrossings, the pseudo bracket polynomial coincides with the classical Kauffman bracket polynomial. 
\end{remark}

By the same arguments as in \cite{HD}, $\langle K \rangle$ is invariant under the moves R0, R2, R3, PR1, PR2, and PR3. Hence, it is also invariant under the subset of moves R0, R2, R3, PR2, and PR3, which according to Definition~\ref{def:pseudo-regular} generate pseudo regular isotopy. Thus, we can state that:

\begin{proposition} \label{regular_invariance}
The pseudo bracket polynomial is invariant under regular isotopy of oriented planar pseudo links. 
\end{proposition}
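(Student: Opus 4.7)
The plan is to verify invariance of $\langle K \rangle$ under each of the five generating moves of regular isotopy (R0, R2, R3, PR2, PR3), localizing every argument to the disc where the move occurs and using the recursive rules of Definition~\ref{pkaufb}. Invariance under R0 is immediate, since the recursion depends only on the combinatorial data of crossings, precrossings, and disjoint closed loops.

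For the classical moves R2 and R3, I would follow Kauffman's original calculation in \cite{Kauffman} verbatim. Rule (3) coincides with the classical bracket skein relation, and no precrossings appear in the local picture of R2 or R3. Expanding both crossings of an R2 configuration by rule (3) produces four smoothings; two of them introduce a disjoint closed loop contributing the factor $-A^2-A^{-2}$, while the other two each contribute the parallel-strand diagram with coefficient $1$. The algebraic sum collapses to exactly the parallel-strand diagram, which is the other side of R2. Invariance under R3 then follows formally from R2 by applying rule (3) once at the crossing that is slid across and comparing the two resulting pairs of diagrams.

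For the pseudo moves PR2 and PR3, I would apply rules (1) and (2) of Definition~\ref{pkaufb} to every precrossing in the local picture, replacing it with a single oriented smoothing weighted by $V$ or $H$ according to the orientation of the incoming arcs. Since rules (1) and (2) prescribe the smoothing purely from the orientation data (there is no over/under information at a precrossing), the two precrossings in a PR2 move, respectively the single precrossing on either side of a PR3 move, receive matching weights on both sides of the equation. After this orientation-preserving replacement, PR2 and PR3 reduce, respectively, to classical R2 and R3 equivalences between the resulting diagrams, already handled above.

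The main---albeit minor---obstacle I anticipate is the orientation bookkeeping in the PR2 and PR3 cases: across the several variants of each move induced by different strand orientations, one must check in every case that the monomial prefactors $V^a H^b$ produced by rules (1) and (2) agree on the two sides of the move, so that they factor out before the R2/R3 reduction is invoked. Once this verification is carried out, the proposition is immediate, since no step above appeals to invariance under R1 or PR1; in effect, the argument is exactly the one in \cite{HD} restricted to the subset of moves that generates regular isotopy per Definition~\ref{def:pseudo-regular}.
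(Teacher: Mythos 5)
Your overall strategy coincides with the paper's: the paper proves Proposition~\ref{regular_invariance} by appealing to the move-by-move verification of \cite{HD} (resolve the local picture by the skein rules and finish with Kauffman's classical computations from \cite{Kauffman}), and you correctly note that none of this uses R1, PR1, or the specialization $H=1-Vd$, so the argument restricts to the generating moves of regular isotopy. However, your treatment of the pseudo moves rests on a misreading of Definition~\ref{pkaufb}. In that definition the first two rules are the rules for classical oriented crossings (cf.\ the remark immediately following the definition), and the precrossing rule is a genuine two-term skein relation: a precrossing contributes $V$ times one smoothing \emph{plus} $H$ times the other, not ``a single oriented smoothing weighted by $V$ or $H$ according to the orientation''. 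This is visible in Example~\ref{exampleptrefoil}, whose polynomial contains the mixed monomials $VH$ and $H^{2}$, which could not occur if each precrossing were replaced by one smoothing. Consequently the step in which ``the monomial prefactors $V^{a}H^{b}$ factor out before the R2/R3 reduction'' is not an argument about the polynomial actually defined. The correct version is close but should be stated: resolve each precrossing in the local picture identically on both sides of PR2/PR3 (the local orientations, hence the assignment of $V$ and $H$ to the two smoothings, are unchanged by the move), obtaining sums over states with matching coefficients, and then check that each corresponding pair of states differs by planar isotopy and R2/R3 moves; this termwise matching is exactly the argument the paper displays for the mixed move MPR3 in Figure~\ref{kf1}.

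Two smaller points. Your summary of the R2 expansion is garbled: of the four states, exactly one acquires a disjoint loop (with coefficient $1$), and it cancels the two states carrying $A^{2}$ and $A^{-2}$, leaving the single parallel-strand state with coefficient $1$; since you defer to Kauffman's calculation verbatim this is harmless, but as written it is not the computation. Finally, make sure your local pictures agree with the moves as actually depicted in Figure~\ref{reid} (your description of PR2 as involving two precrossings should be checked against the figure); the termwise-resolution method just described applies to whichever configurations occur there, so the structure of your proof survives once these corrections are made.
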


\begin{example} \label{exampleptrefoil}
We compute the pseudo bracket polynomial of the pseudo trefoil knot $K$ of Figure~\ref{pk1}, whose skein tree is depicted in Figure~\ref{pseudo-spherical} and we obtain the following: 
$$\langle K \rangle = - V^2 A^3 - VHA^{-3} + VHA^{-1} - H^2 A^{-3} + VHA^{-5}$$

\begin{figure}[H]
\begin{center}
\includegraphics[width=6in]{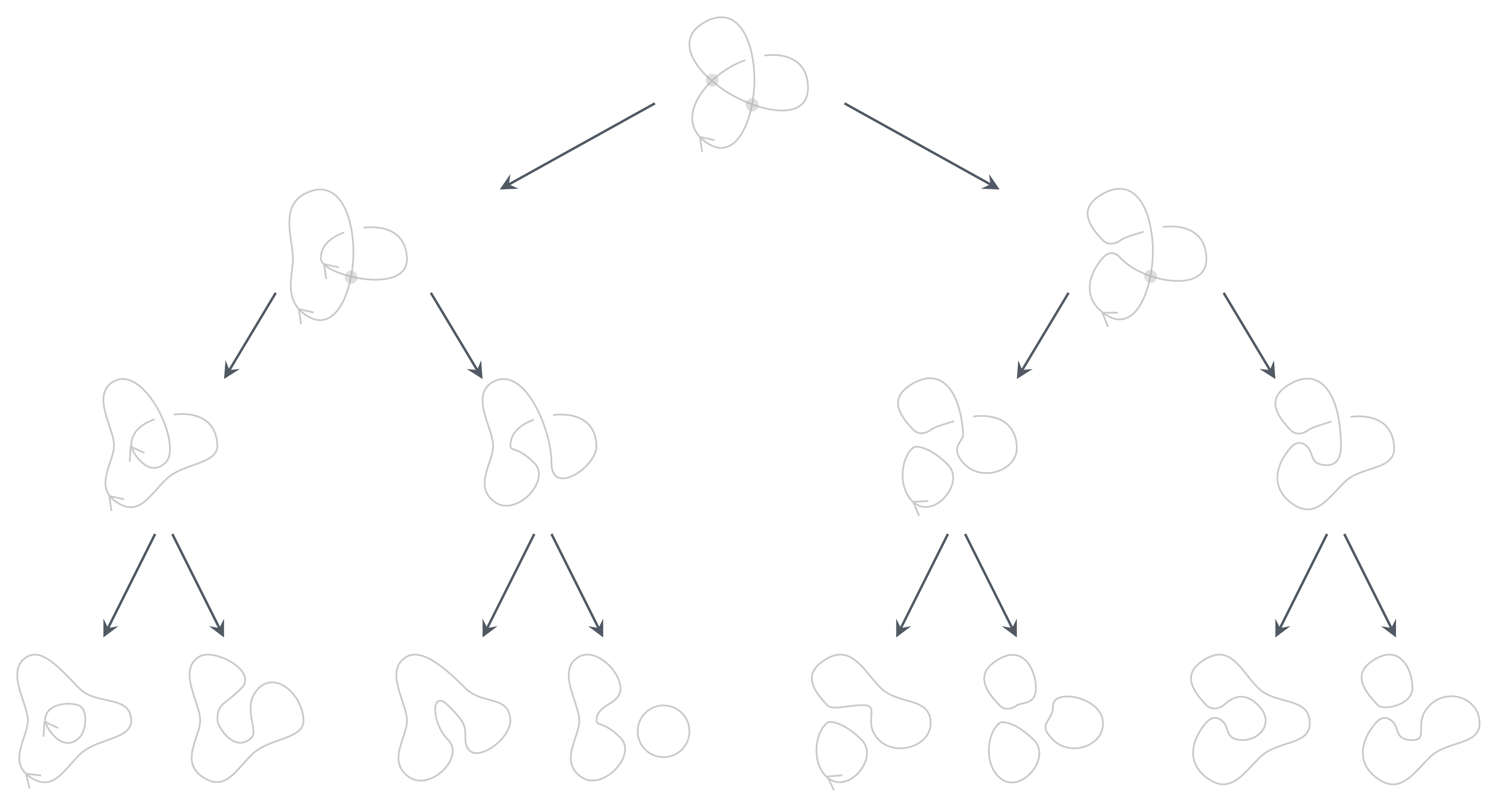}
\end{center}
\caption{The skein tree of a pseudo trefoil knot.}
\label{pseudo-spherical}
\end{figure}
\end{example}

\subsection {The planar pseudo Jones polynomial}\label{sec:planar-Jones} 

As in the case of the classical bracket polynomial \cite{Kauffman} for links in $S^3$, one can normalize the pseudo bracket polynomial in order to satisfy the moves R1 and PR1. For the invariance under the move PR1, the specialization $H = 1-Vd$ must be forced, as shown in \cite{HD}. For the invariance under the move R1, we  consider the product of $\langle L \rangle$ by the factor $\left( -A^{-3}\right) ^{w(L)}$ as in the normalization of the classical bracket polynomial. Note that we retain the definition of writhe $w(L)$ of a pseudo link diagram $L$, as defined in \cite{HD}, where precrossings do not contribute to the counting of signs. So the writhe $w(L)$ is defined as in Equation~\ref{writhe} and it is  an invariant of regular isotopy for pseudo links. Hence we have the following (see \cite[Corollary~2]{HD}): 

\begin{theorem}\label{th:pl-bracket}
Let $L$ be an oriented planar pseudo link diagram. The normalized pseudo bracket polynomial is a 2-variable Laurent polynomial in the ring $\mathbb{Z}\left[A^{\pm 1}, V\right]$, defined as
\[
P_L(A, V)\ =\ (-A^{-3})^{w(L)}\, \langle L \rangle,
\]
\noindent where $w(L):=\underset{c\in C(L)}{\sum}\, sgn(c)$, $C(L)$ the set of classical crossings of $L$ and $\langle L \rangle$ denotes the pseudo bracket polynomial of $L$ for  $H = 1-Vd$, and it is an isotopy invariant of $L$.
\end{theorem}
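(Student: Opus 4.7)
The plan is to verify that the normalized polynomial $P_L(A,V)$ is unchanged under each move in the full pseudo Reidemeister set: R0, R1, R2, R3, PR1, PR2, PR3. Invariance under the regular isotopy moves R0, R2, R3, PR2, PR3 is essentially free, since by Proposition~\ref{regular_invariance} the bracket $\langle L\rangle$ is invariant under this subset of moves and, moreover, the writhe $w(L)$ is unaffected by these moves (R0 is planar isotopy, R2 introduces/removes a pair of crossings of opposite sign, R3 permutes three crossings preserving their signs, and PR2, PR3 act on precrossings which by definition are excluded from the sum defining $w(L)$). Hence the prefactor $(-A^{-3})^{w(L)}$ is constant under regular isotopy and transports the invariance of $\langle L\rangle$ to $P_L$.

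The invariance of $P_L$ under the classical Reidemeister move R1 is the standard Kauffman argument and is insensitive to the presence of precrossings: applying the skein rule (3) of Definition~\ref{pkaufb} at the new classical crossing and using rule (2) to absorb the small circle shows that adding a positive (resp.\ negative) kink multiplies $\langle L\rangle$ by $-A^{3}$ (resp.\ $-A^{-3}$). At the same time, $w(L)$ increases (resp.\ decreases) by $1$, so the factor $(-A^{-3})^{w(L)}$ picks up the inverse $-A^{-3}$ (resp.\ $-A^{3}$), and the two contributions cancel exactly.

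The remaining and decisive step is invariance under PR1, and this is what forces the specialization $H=1-Vd$. Since precrossings do not contribute to $w(L)$, the move PR1 leaves the writhe unchanged, so invariance of $P_L$ under PR1 is equivalent to invariance of $\langle L\rangle$ under PR1. Applying the precrossing skein rule of Definition~\ref{pkaufb} to a kink carrying a precrossing yields a sum $V\langle L_{1}\rangle + H\langle L_{2}\rangle$, where one smoothing recovers the original strand with a disjoint nugatory loop (contributing the loop value $d=-A^{2}-A^{-2}$ by rule (2)) while the other smoothing produces exactly the original strand. Equating the result with $\langle L\rangle$ gives the relation $Vd+H=1$, i.e.\ $H=1-Vd$, which is precisely the specialization imposed in the statement. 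Under this specialization PR1 invariance holds, and combined with the previous two paragraphs we conclude that $P_L(A,V)$ is an isotopy invariant of $L$.

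The only subtlety one must be careful about is bookkeeping in the PR1 computation—in particular, that the precrossing skein rule in Definition~\ref{pkaufb} is written for oriented diagrams and that PR1 comes in several orientation variants; one must check that the same specialization $H=1-Vd$ works uniformly for all of them. This is routine once the computation for one variant is carried out, because the other variants are related by planar isotopy and R0, under which $\langle\cdot\rangle$ is already invariant. No further obstacles arise.
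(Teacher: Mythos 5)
Your proposal is correct and takes essentially the same route as the paper: regular-isotopy invariance via Proposition~\ref{regular_invariance} together with the regular-isotopy invariance of the writhe, the standard Kauffman kink computation for R1, and the PR1 kink computation forcing $H=1-Vd$, which the paper simply delegates to \cite{HD}. You merely spell out explicitly the two kink calculations that the paper cites rather than reproduces.
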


The normalized pseudo bracket polynomial is analogous to the normalized bracket for classical links and it can be considered as an equivalent to the Jones polynomial for planar/spatial pseudo links:  

\begin{definition}
Applying the variable substitution $A=t^{-1/4}$ in the polynomial $P_L(A, V)$ we obtain the equivalent  {\it pseudo Jones polynomial}, which is an invariant of planar pseudo links and their corresponding spatial pseudo links.
\end{definition}

\begin{remarks}\rm
Consider the pseudo trefoil of Example~\ref{exampleptrefoil}. We observe that:
\begin{itemize}
    \item[i.] Its pseudo bracket polynomial contains the new variable $V$, which witnesses the presence of pseudo crossings (in fact $V^2$ for two pseudo crossings), and this will remain after applying the writhe normalization factor. So, the polynomial $P$ distinguishes the pseudo trefoil from the corresponding classical trefoil or the unknot. 
     \item[ii.] In the absence of precrossings, $P_L(A, V)$ resp.  $P_L(t^{-1/4}, V)$ specializes to the normalized  bracket resp. the Jones polynomial for  classical links.
\end{itemize}
\end{remarks}

\section{Annular pseudo links}\label{sec:annular}

In this section, we recall essential results on annular pseudo links as presented in \cite{DLM3}. In particular we recall the bijections between annular pseudo links and their lift in the solid torus as well as with the  ${\rm O}$-mixed pseudo links in the three-sphere. We then define the bracket polynomial for annular pseudo links (cf. also \cite{D}) and for ${\rm O}$-mixed pseudo links. We finally normalize to the corresponding Jones polynomials, which we prove to be topological invariants of  annular pseudo links, pseudo links in the solid torus and ${\rm O}$-mixed pseudo links in the three-sphere, respectively.

\subsection{Preliminaries}\label{sec:prel-annular}

An {\it annular pseudo link diagram} is defined as a pseudo link diagram in the annulus $\mathcal{A}$. For an illustration see Figure~\ref{pkannulus}. An \textit{annular pseudo link} is an equivalence class of annular pseudo link diagrams related by a finite sequence of surface isotopy moves in $\mathcal{A}$, the classical Reidemeister moves and the pseudo Reidemeister moves (recall Figure~\ref{reid}). This equivalence relation between annular pseudo link diagrams is called {\it isotopy}. Further, according to Definition~\ref{def:pseudo-regular}, {\it regular isotopy } among annular pseudo link diagrams is induced by all moves above with the exception of the moves R1 and PR1. 

\begin{figure}[H]
\begin{center}
\includegraphics[width=2in]{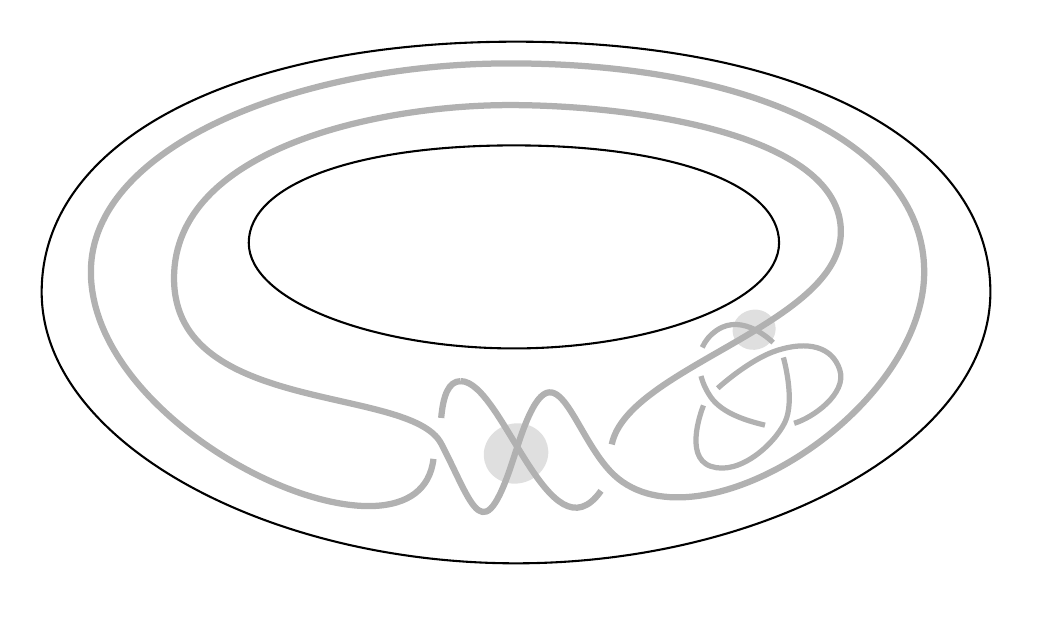}
\end{center}
\caption{An annular pseudo link.}
\label{pkannulus}
\end{figure}

Annular pseudo links can be studied in different topological contexts. Recall first that the {\it lift} of an annular pseudo link diagram in the thickened annulus $\mathcal{A} \times I$, where $I$ denotes the unit interval $[0,1]$, is defined as a collection of closed curve(s) constrained to the interior of the solid torus ST - which is homeomorphic to $\mathcal{A} \times I$ - consisting of embedded discs from which emanate embedded arcs \cite{DLM3}. More specifically, each classical crossing of the diagram is embedded in a sufficiently small 3-ball that lies entirely within $\mathcal{A} \times I$, while precrossings are supported by sufficiently small rigid discs, which are embedded in $\mathcal{A} \times I$. Note also that the simple arcs connecting the (pre)crossings can be replaced by isotopic ones in $\mathcal{A} \times I$. The resulting lift is called a {\it pseudo link in the solid torus}. For an illustration of the lift of an annular pseudo link diagram to a pseudo link in the solid torus, we refer to Figure~\ref{annularlifting}. 

\begin{figure}[H]
\begin{center}
\includegraphics[width=2.3in]{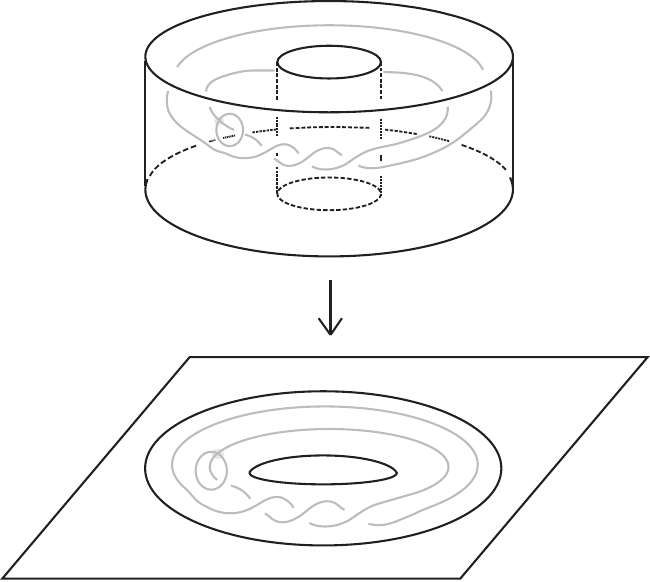}
\end{center}
\caption{The lift of an annular pseudo link to a  pseudo link in $\mathcal{A} \times I$ (cf. \cite{DLM3}).}
\label{annularlifting}
\end{figure}

Two (oriented) pseudo links in the solid torus are said to be {\it (ambient) isotopic} if they are related by isotopies of arcs and discs that are confined to the interior of the solid torus. In \cite{DLM3} we proved the following theorem:

\begin{theorem}\label{isopST}
Two (oriented) pseudo links in the solid torus $\mathcal{A} \times I$ are (ambient) isotopic if and only if any two corresponding annular pseudo link diagrams of theirs, projected onto the annulus $\mathcal{A} \times \{0\}$, are related by surface isotopies, classical Reidemeister moves and the pseudo Reidemeister moves.
\end{theorem}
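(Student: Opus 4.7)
The plan is to proceed in parallel with the generalised Reidemeister theorem for spatial pseudo links recalled in \S~\ref{sec:planar}, by combining two already established ingredients: the classical Reidemeister-type theorem for ordinary links in the solid torus (stating that ambient isotopy in $\mathcal{A}\times I$ corresponds on the annulus to surface isotopies together with R1, R2, R3), and the planar pseudo Reidemeister theorem, which dictates how the supporting rigid discs of the precrossings behave under generic isotopy.

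The reverse implication is the routine direction. I would check each diagrammatic move individually: a surface isotopy of $\mathcal{A}$ clearly lifts to an ambient isotopy of the solid torus supported in a thickened collar; each classical Reidemeister move R1, R2, R3 lifts to a local isotopy inside a 3-ball contained in $\mathcal{A}\times I$, exactly as in the three-space case; and each pseudo move PR1, PR2, PR3 lifts to a rigid-disc isotopy in $\mathcal{A}\times I$, since the supporting discs involved can be taken small enough to lie inside arbitrarily small balls entirely contained in the solid torus. Consequently, any finite sequence of such diagrammatic moves induces an ambient isotopy of the corresponding pseudo link in the solid torus.

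For the forward implication, I would start with an ambient isotopy $F\colon (\mathcal{A}\times I)\times [0,1] \to \mathcal{A}\times I$ between two pseudo links in the solid torus and put it in generic position with respect to the vertical projection $p\colon \mathcal{A}\times I \to \mathcal{A}\times\{0\}$, while keeping each rigid disc embedded transversally to the strands away from finitely many time instants. Outside small neighbourhoods of the supporting discs, the projection traces an ordinary isotopy of a classical tangle in the annulus, whose generic singular events are accounted for by surface isotopies and the moves R1, R2, R3, by the classical solid-torus Reidemeister theorem. The remaining events come from time slices where a supporting disc either crosses itself, meets a classical arc, or meets another disc; by shrinking the discs and perturbing $F$, I would arrange such events to occur one at a time inside a small 3-ball and identify each of them with exactly one of the moves PR1, PR2, PR3, using the same local analysis as in the planar pseudo case of \S~\ref{sec:planar}.

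The main obstacle will be handling the interactions between the rigid supporting discs and the non-trivial topology of the solid torus, i.e.\ ensuring that the non-contractible loops of $\mathcal{A}\times I$ do not force any new global move beyond the above local ones. This is resolved by choosing $F$ in sufficiently general position so that every singular event is localised inside a small 3-ball embedded in $\mathcal{A}\times I$; once the events are localised, the classification of generic singularities of families of immersed arcs and rigid discs reduces the analysis to the planar argument of \S~\ref{sec:planar}, yielding precisely the moves R0, R1, R2, R3, PR1, PR2, PR3, as required.
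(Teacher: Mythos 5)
A preliminary remark: this paper does not actually prove Theorem~\ref{isopST}; it is recalled from \cite{DLM3}, so there is no in-paper proof to compare yours against. Judged on its own terms, your strategy is the standard one for Reidemeister-type theorems and is consistent in spirit with how the result is established in the cited source: lift each diagrammatic move (surface isotopy, R1--R3, PR1--PR3) to an isotopy of arcs and rigid discs inside $\mathcal{A}\times I$ for the easy direction, and for the hard direction put the ambient isotopy in general position with respect to the projection onto $\mathcal{A}\times\{0\}$ and classify the finitely many singular events. Your observation that the nontrivial topology of the solid torus contributes nothing beyond replacing planar isotopy by surface isotopy of the annulus is correct, since all singular events of a generic projected isotopy are local.

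The thin spot is exactly the step you dispatch by appealing to ``the same local analysis as in the planar pseudo case of \S~\ref{sec:planar}'': that section only quotes the spatial pseudo Reidemeister theorem from \cites{DLM3,HJMR}, it contains no such local analysis, so your proof is really a reduction to a result you have not reproved and should be stated as such. The crux of the pseudo extension is the classification of generic events involving the rigid supporting discs --- an arc sweeping over or under a disc, a kink forming on an arc emanating from a disc, rotations or flips of a rigid disc, and interactions of a disc with another (pre)crossing --- together with the verification that these are exhausted by PR1, PR2, PR3 (essentially Kauffman's rigid-vertex analysis \cite{K3} adapted to precrossings); in your sketch this is asserted rather than argued. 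A second, smaller point: invoking ``the classical solid-torus Reidemeister theorem'' for the part of the isotopy away from the discs is not quite a legitimate citation, since that theorem concerns closed links rather than a time-dependent tangle in the complement of moving discs; the cleaner argument treats all singular strata of the projected isotopy uniformly by general position. Neither issue is a wrong turn, but both must be written out (or the reduction to the planar/spatial pseudo theorem made precise) before the proof is complete.
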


Another approach to the theory of annular pseudo links is to represent annular pseudo link diagrams, resp.  pseudo links in the solid torus, as mixed pseudo link diagrams in the plane, resp. as mixed pseudo links in the three-sphere $S^3$. To be more specific, we recall from \cite{LR1} that isotopy classes of links in a knot/link complement correspond bijectively to isotopy classes of mixed links in $S^3$, through isotopies that preserve a fixed sublink, which represents the knot/link complement, and act only on the {\it moving part}, which represents the link in the knot/link complement.  In our case, viewing the solid torus ST as the complement of a solid torus in $S^3$, a link in ST is represented uniquely by a mixed link in $S^3$ that contains the standard unknot, ${\rm O}$, as a point-wise fixed sublink representing the complementary solid torus. 

In analogy, in \cite{DLM3}, and using the notion of  lift of an annular pseudo link diagram,  we define the notion of an (oriented) ${\rm O}$-\textit{mixed pseudo link} in $S^3$ as a mixed link containing the standard unknot ${\rm O}$ as the fixed part and the lift of an annular pseudo link diagram as the moving part, with no precrossings between the fixed and the moving part. Clearly, an (oriented) ${\rm O}$-\textit{mixed pseudo link diagram} is a regular projection, on the plane of ${\rm O}$, of an (oriented) ${\rm O}$-mixed pseudo link ${\rm O}\cup K$, such that some double points are precrossings, as projections of the precrossings of the moving part $K$,  and the rest of the crossings are either crossings of arcs of the moving part or \textit{mixed crossings} between arcs of the moving and the fixed part, endowed with over/under information.  

Then, {\it isotopy classes of (oriented) pseudo links in the solid torus are in bijective correspondence with isotopy classes of (oriented) ${\rm O}$-mixed pseudo links in $S^{3}$ via isotopies that keep ${\rm O}$ fixed.} For an example of a pseudo link in the solid torus and its transition to its corresponding ${\rm O}$-mixed pseudo link see Figure~\ref{pmixedtor}. The reader may compare with \cite{D}, where a pseudo link in ST is defined as an ${\rm O}$-mixed pseudo link.

\begin{figure}[H]
\begin{center}
\includegraphics[width=4.5in]{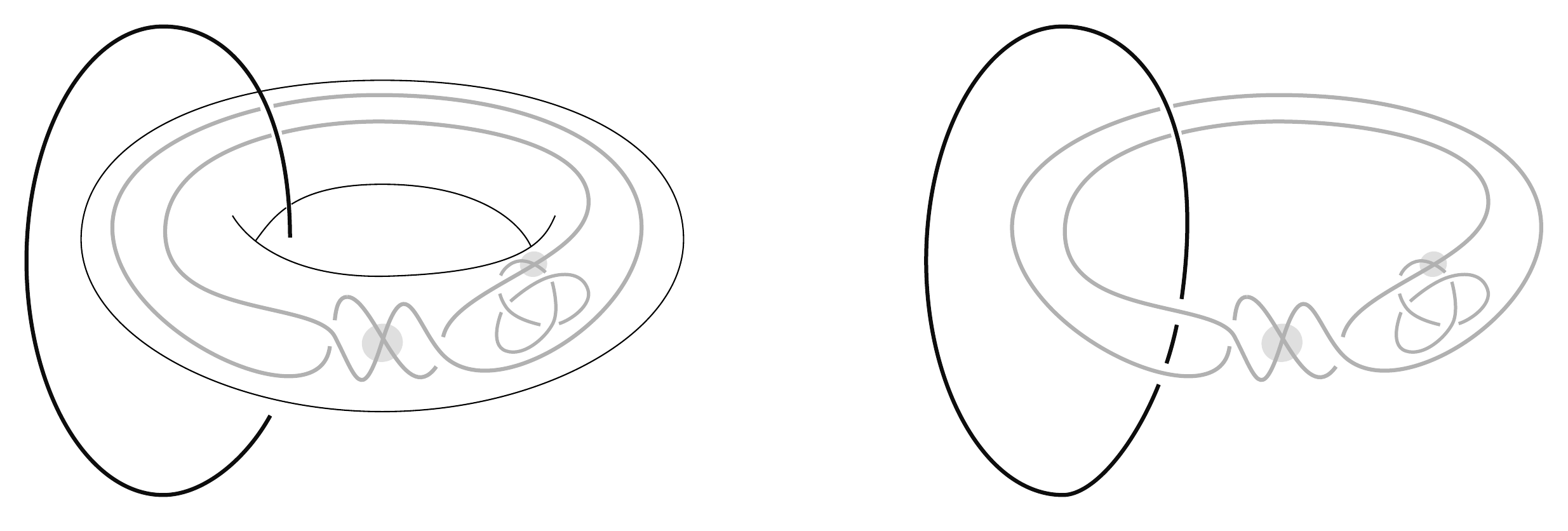}
\end{center}
\caption{A pseudo link in the solid torus and its corresponding ${\rm O}$-mixed pseudo link.}
\label{pmixedtor}
\end{figure}

The isotopy of ${\rm O}$-mixed pseudo links is translated on the diagrammatic level as follows: 

\begin{theorem} \label{Omixedreid}\cite{DLM3}
Two (oriented) ${\rm O}$-mixed pseudo links in $S^{3}$ are isotopic if and only if any two (oriented)  ${\rm O}$-mixed pseudo link diagrams of theirs  differ by planar isotopies, a finite sequence of the classical and the pseudo Reidemeister moves for the moving parts (recall Figure~\ref{reid}), and moves that involve the fixed and the moving parts, called {\rm mixed Reidemeister moves}, comprising the moves MR2, MR3, and MPR3, as exemplified in Figure~\ref{mpr}. 
\end{theorem}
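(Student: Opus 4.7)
The plan is to derive Theorem~\ref{Omixedreid} by combining three ingredients: (i) the bijective correspondence, already established just above the statement, between isotopy classes of (oriented) pseudo links in the solid torus and isotopy classes of (oriented) ${\rm O}$-mixed pseudo links in $S^{3}$ (via isotopies fixing ${\rm O}$); (ii) the diagrammatic equivalence for pseudo links in the solid torus given in Theorem~\ref{isopST}; and (iii) the classical mixed Reidemeister theorem of Lambropoulou–Rourke \cite{LR1}, which in the purely classical case already produces the moves MR2, MR3 as the diagrammatic translation of sliding a strand of the moving part around the hole of the complementary solid torus.

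The ``only if'' direction is the substantive one. Starting from an isotopy of two ${\rm O}$-mixed pseudo links in $S^{3}$ keeping ${\rm O}$ fixed, I would first use ingredient (i) to reinterpret it as an isotopy of the corresponding pseudo links in the solid torus $\mathcal{A}\times I$. By Theorem~\ref{isopST}, this is captured, on diagrams in the annulus, by a finite sequence of surface isotopies of $\mathcal{A}$, classical Reidemeister moves and pseudo Reidemeister moves R1, R2, R3, PR1, PR2, PR3. Next I would project the annular diagram onto the plane of ${\rm O}$, placing ${\rm O}$ around the hole of $\mathcal{A}$. Under this projection, the Reidemeister and pseudo Reidemeister moves supported inside $\mathcal{A}$ translate verbatim into the moves R1, R2, R3, PR1, PR2, PR3 applied to the moving part in the plane, while the surface isotopies of $\mathcal{A}$ split into two types: those that do not wind a strand around the hole (which become planar isotopies of $S^{2}\setminus{\rm O}$) and those that do, which, exactly as in the classical Lambropoulou–Rourke argument, are realized as sequences of MR2 and MR3 moves involving the fixed unknot ${\rm O}$. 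The only new ingredient relative to \cite{LR1} is that the moving part may contain precrossings; I would check case by case that a precrossing traversing the hole of the annulus is realized in the plane by the mixed pseudo move MPR3 (possibly together with MR2, MR3 and the already permitted PR2, PR3). No mixed MPR1 or MPR2 move is required because, by definition of an ${\rm O}$-mixed pseudo link diagram, mixed crossings always carry over/under information, so precrossings never occur between the moving and the fixed parts; a would-be MPR2 is thus obtained from a PR2 after an MR2.

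The ``if'' direction is routine: each move listed is clearly realized by an ambient isotopy of $S^{3}$ that keeps ${\rm O}$ pointwise fixed. For the classical moves R1, R2, R3 and the pseudo moves PR1, PR2, PR3 this is standard; for MR2, MR3, MPR3 one exhibits the local isotopy that slides the indicated strand (classical or bearing a precrossing on a rigid disc) across an arc of ${\rm O}$ inside a small ball disjoint from every other crossing.

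The main obstacle I anticipate is the detailed verification in the ``only if'' direction that the three mixed moves MR2, MR3, MPR3 really do suffice to realize every passage of a precrossing region across ${\rm O}$, and that no additional mixed pseudo Reidemeister move is forced. This requires enumerating the local pictures (a small rigid disc carrying a precrossing approaching an arc of ${\rm O}$) and showing that each such configuration is reducible to a single MPR3 modulo planar isotopy and PR2, PR3, together with the classical MR2, MR3 used to slide the neighbouring classical strands off ${\rm O}$. Once this local analysis is complete, the theorem follows by concatenating the translations of the annular surface isotopies and Reidemeister moves obtained from Theorem~\ref{isopST}.
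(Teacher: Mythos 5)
The paper does not actually prove Theorem~\ref{Omixedreid}: it is recalled from \cite{DLM3} with a citation and no argument, so there is no in-paper proof to compare against. Your outline is nevertheless the natural route and essentially the one the original proof takes: restrict an ambient isotopy of $S^{3}$ keeping ${\rm O}$ pointwise fixed to the complementary solid torus, invoke Theorem~\ref{isopST} for the annular diagrammatic equivalence, and then translate, in the spirit of Lambropoulou--Rourke \cite{LR1}, every passage of a moving strand (or of a rigid precrossing disc) across ${\rm O}$ into the moves MR2, MR3 and MPR3, observing that no mixed precrossing moves are needed since mixed crossings carry over/under information by definition. The only caveat is that the step you explicitly defer --- the local case analysis showing that these three mixed moves suffice to realize every such passage, and that the choice of planar projection relative to ${\rm O}$ introduces no further moves --- is the substantive content of the theorem rather than a routine check, so what you have is a correct reduction and proof plan rather than a complete proof.
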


\begin{figure}[H]
\begin{center}
\includegraphics[width=4.7in]{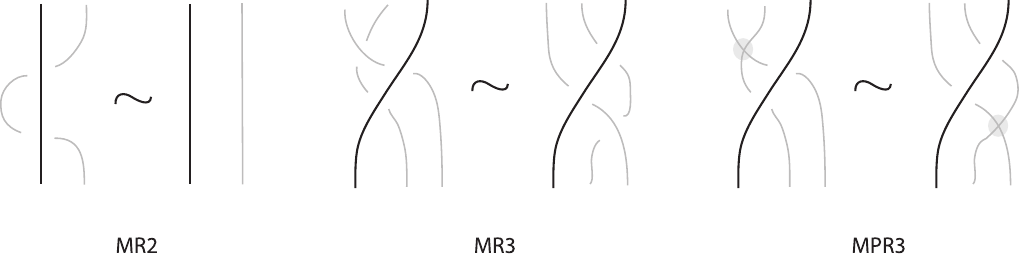}
\end{center}
\caption{The mixed Reidemeister moves for ${\rm O}$-mixed pseudo link diagrams.}
\label{mpr}
\end{figure}

It follows from the above that, {\it two (oriented) annular pseudo links diagrams are isotopic if and only if any two corresponding (oriented) ${\rm O}$-mixed pseudo link diagrams are isotopic.}

\subsection{The annular pseudo bracket polynomial}

The pseudo bracket polynomial for planar pseudo link diagrams as defined in \cite{HD} was naturally extended in \cite{D} to pseudo links in the solid torus, exploiting the theory of mixed links and representing the solid torus by the once punctured disc, and further extended in \cite{D1} to pseudo links in the handlebody of genus $g \geq 2$.

Here we extend the 3-variable bracket polynomial for planar pseudo link diagrams as defined in Definition~\ref{pkaufb} to the setting of annular pseudo links. The main difference from the planar pseudo bracket lies in the fact that when applying the skein relation, some states may contain one or more essential unknots, to which we assign a specific variable, denoted by $s$. So, any number of essential unknots along with the standard null-homotopic unknot constitute distinct initial conditions for the polynomial, which we define as follows: 

\begin{definition}\label{pkaufbst}\rm
Let $K$ be an oriented annular pseudo link diagram. The {\it annular pseudo bracket polynomial} of $K$, denoted $\langle K \rangle_\mathcal{A}$ or simply $\langle K \rangle$, is a 4-variable Laurent polynomial  in $\mathbb{Z}\left[A^{\pm 1}, V, H, s \right]$,  defined by means of the following rules, where a punctured plane represents the annulus:
\begin{figure}[H]
\begin{center}
\includegraphics[width=3.6in]{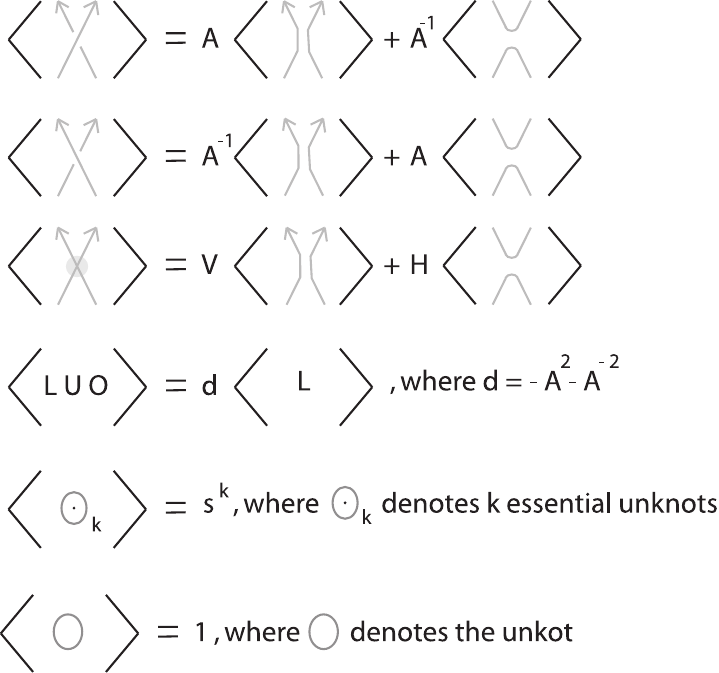}
\end{center}
\label{pkbst}
\end{figure}
\end{definition}

\noindent Note that the last rule can only be applied to an annular state diagram that contains only a standard null-homotopic unknot.

\begin{remarks} 
\,\\
\noindent (a) If we have an annular pseduo diagram that contains no essential closed curves, by using the inclusion of a disc in $\mathcal{A}$ (see left-hand illustration in Figure~\ref{planarannular}), such annular  diagrams can be viewed as planar state diagrams and resolve into states as in the planar case.
\smallbreak
\noindent  (b) In contrast, using the inclusion of $\mathcal{A}$ in a disc  (see right-hand illustration in Figure~\ref{planarannular}, cf. \cite{DLM3}), essential unknots become null homotopic, so substituting $s^k$ by $d^{k-1} = (-A^2-A^{-2})^{k-1}$, the annular pseudo bracket polynomial specializes to the planar pseudo bracket of Definition~\ref{pkaufb} for planar pseudo links.
\smallbreak
\noindent  (c) The main difference of our annular pseudo bracket polynomial from the one defined in \cite{D} is that the variable $H$ is specialized to $H=1-Vd$ in \cite{D}.   
\smallbreak
\noindent  (d) In the fifth rule of Definition~\ref{pkaufbst} the initial condition could be substituted by $s_k$ in place of $s^k$, for $k\in \mathbb{N}$, giving rise to the {\it universal annular pseudo bracket polynomial} with infinitely many variables. Since precrossings are also smoothened, the universal annular pseudo bracket polynomial recovers the Kauffman bracket skein module of annular pseudo links or pseudo links in the solid torus, cf. \cites{HP,D}. 
\end{remarks}

\begin{figure}[H] 
\begin{center} 
\includegraphics[width=5.5in]{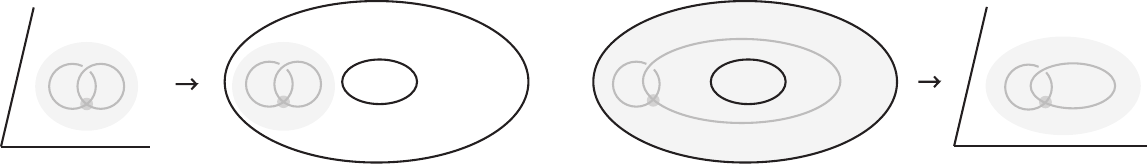} 
\end{center} 
\vspace{8pt}
\caption{Inclusion relations: a disc in the annulus and the annulus in a disc.} 
\label{planarannular} 
\end{figure} 

In analogy to the planar pseudo bracket polynomial of Definition~\ref{pkaufb}, we have the following:

\begin{proposition} \label{annular_regular_invariance}
The (universal) annular pseudo bracket polynomial is invariant under regular isotopy of oriented annular pseudo links. 
\end{proposition}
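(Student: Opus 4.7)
The plan is to verify invariance of $\langle \cdot \rangle_{\mathcal{A}}$ under each generating move of regular isotopy by direct state-sum comparison on the two sides of each move. The key observation, which lets me leverage the planar case (Proposition~\ref{regular_invariance}), is that all skein rules in Definition~\ref{pkaufbst} are \emph{local}: the substitution rules at classical crossings and precrossings are verbatim those of Definition~\ref{pkaufb}, and only the evaluation of the closed components of a fully resolved state is different, assigning $d=-A^2-A^{-2}$ to each null-homotopic unknot and $s$ (or $s_k$ in the universal version) to each essential unknot. Since each of the moves R0, R2, R3, PR2, PR3 is supported in a disc $D\subset \mathcal{A}$, any closed loop created by smoothing the (pre)crossings inside $D$ bounds a disc in $\mathcal{A}$ and is therefore null-homotopic; the essential components produced by arcs passing outside $D$ match up bijectively between the two sides of the move. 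Consequently, corresponding states carry identical contributions from the essential-unknot variables, which factor out of the comparison.

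Invariance under R0 is immediate, since ambient isotopy in $\mathcal{A}$ preserves the homotopy class of every closed curve in every resolution. For R2 and R3 the verification reduces, after factoring out the common essential-unknot contributions, to exactly Kauffman's classical identities for the bracket polynomial applied to local diagrams in $D$, because every locally created loop is null-homotopic and hence evaluated by $d$; this is identical to the proof of Proposition~\ref{regular_invariance}. For PR2 one applies the skein relations at each of the two precrossings, obtaining four states with coefficients $V^2, VH, HV, H^2$; by direct inspection of Figure~\ref{reid}, these pair off identically on the two sides of the move.

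The main obstacle is PR3, which involves one precrossing together with two classical crossings. The strategy is to first expand the precrossing on both sides, splitting each side into a $V$-weighted diagram plus an $H$-weighted diagram, each of which retains the two remaining classical crossings in their original R3-configuration. Classical R3 invariance of the Kauffman bracket, applied term by term, then yields the required equality; the bookkeeping step is to confirm that both the vertical and the horizontal smoothing of the precrossing on each side of PR3 leave behind a bona fide R3-configuration, which is direct from Figure~\ref{reid}. Since every step is local, the essential-unknot variables $s$ (or $s_k$) play no role in the comparison, so the universal version of the statement follows from the same argument, as do the oriented variants of R2, R3, PR2, PR3 needed for the oriented case.
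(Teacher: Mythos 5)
Your overall route is sound and, in substance, it is the same argument the paper relies on: the paper's own proof is a one-line reduction, citing \cite[Proposition~5.5]{D} for the checks of R2, R3, PR2, PR3 and merely observing that none of those checks uses the specialization $H=1-Vd$, so they apply verbatim to the $4$-variable and universal versions. What you add, correctly, is the annular-specific point that makes this reduction work: each move is supported in a disc of $\mathcal{A}$, so every loop produced by the local resolutions bounds a disc and is weighted by $d$, while the essential components (weighted by $s$, or $s_k$ in the universal version) correspond bijectively on the two sides of the move and factor out. This is exactly why the planar verification (Proposition~\ref{regular_invariance}, going back to \cite{HD}) transfers, and it makes the universal and oriented statements immediate; in that sense your write-up is more self-contained than the paper's.

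Two of your concrete verifications need repair, though. For PR3 you claim that resolving the precrossing ``leaves behind a bona fide R3-configuration'' to which ``classical R3 invariance'' applies; but by your own description PR3 has one precrossing and two classical crossings, so after the precrossing is smoothed only two crossings remain and there is no R3 move to invoke. The correct termwise statement is that the $V$- and $H$-states on the two sides of PR3 differ by planar isotopy and R2 moves, after which the already established invariance under R0 and R2 finishes the argument; this is precisely how the paper handles the analogous mixed move MPR3 in the proof of Theorem~\ref{th:an-mix-in}. For PR2, your bookkeeping presupposes that the move shows two precrossings on each side, giving four states with coefficients $V^2$, $VH$, $HV$, $H^2$ that ``pair off''; you should check this against Figure~\ref{reid}, because no R2-type cancellation of a precrossing (against a classical crossing or against another precrossing) can be among the generating moves: a direct expansion of such a configuration shows the $3$-variable bracket is not invariant under it for any values of $V$ and $H$, so PR2 must be of a different, slide-type form, and its verification is again ``expand the precrossing(s) and match the resulting classical diagrams up to regular isotopy'', not the four-term pairing you describe. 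With these two local corrections your proof is complete.
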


\begin{proof}
We refer to \cite[Proposition~5.5]{D} for details on the invariance under the Reidemeister moves R2, R3, and pseudo Reidemeister moves PR2, PR3, noting that in all proofs there was no need of specializing the variable $H$.
\end{proof}

\begin{example} \label{exampleannularptrefoil}
We compute the annular pseudo bracket polynomial of an annular pseudo trefoil knot $K$, whose skein tree is analyzed in Figure~\ref{pseudo-annular}. We have the following:

\[
\langle K \rangle_\mathcal{A} = - V^2 s^2 A^3 - VHA^{-3} + 2VHA^{-1} + V^2 A^{-1} - V^2 s^2 A^{-1} - H^2 A^{-3} + VHA^{-5}.
\]

\begin{figure}[H]
\begin{center}
\includegraphics[width=6in]{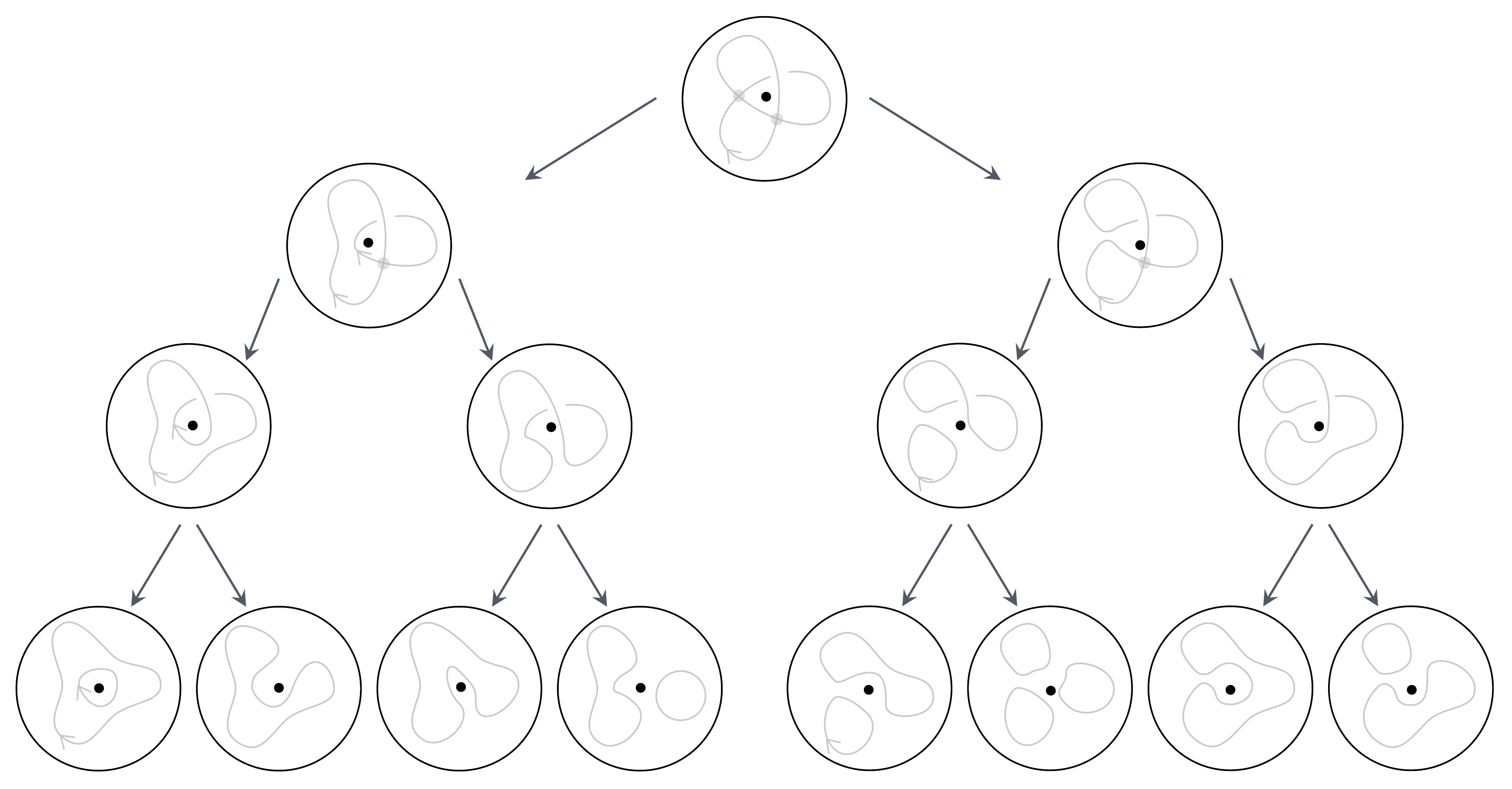}
\end{center}
\vspace{8pt}
\caption{The skein tree of an annular pseudo trefoil.}
\label{pseudo-annular}
\end{figure}

\end{example}

\subsection{The ${\rm O}$-mixed pseudo bracket polynomial}

In this subsection, we consider the planar ${\rm O}$-mixed pseudo link approach to annular pseudo links (recall Subsection~\ref{sec:prel-annular}). In view of Theorem~\ref{Omixedreid}, the annular pseudo bracket polynomial can be adapted to the setting of (oriented) ${\rm O}$-mixed pseudo links. Here the mixed crossings are not subjected to the inductive rules of Definition~\ref{pkaufbst}, since ${\rm O}$ represents the manifold, so must remain fixed throughout. Further, an essential closed curve embedded in the annulus is represented by a mixed Hopf link, as illustrated in Figure~\ref{ppsttt} (cf. \cite{La2}, see also \cite{DLM3}). 

\begin{figure}[H]
\begin{center}
\includegraphics[width=4.5in]{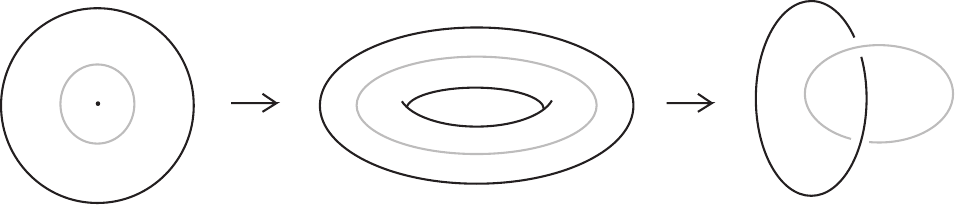}
\end{center}
\vspace{8pt}
\caption{The transition of an essential unknotted curve in $\mathcal{A}$ to an essential unknotted curve in the solid torus, to an ${\rm O}$-mixed pseudo link.}
\label{ppsttt}
\end{figure}

In the case of ${\rm O}$-mixed pseudo links it may look as though a generic state diagram includes finitely many essential unknotted components winding around the fixed ${\rm O}$-curve  some number of times, as illustrated in Figure~\ref{fig:state-O-mixed}. However, in the Kauffman bracket skein module of the solid torus such diagrams reduce to diagrams with essential unknotted components winding once around the fixed ${\rm O}$-curve, as illustrated in Figure~\ref{fig:enter-label}. Cf. \cites{D3,Bo}.

\begin{figure}[H]
    \centering
    \includegraphics[width=0.4\linewidth]{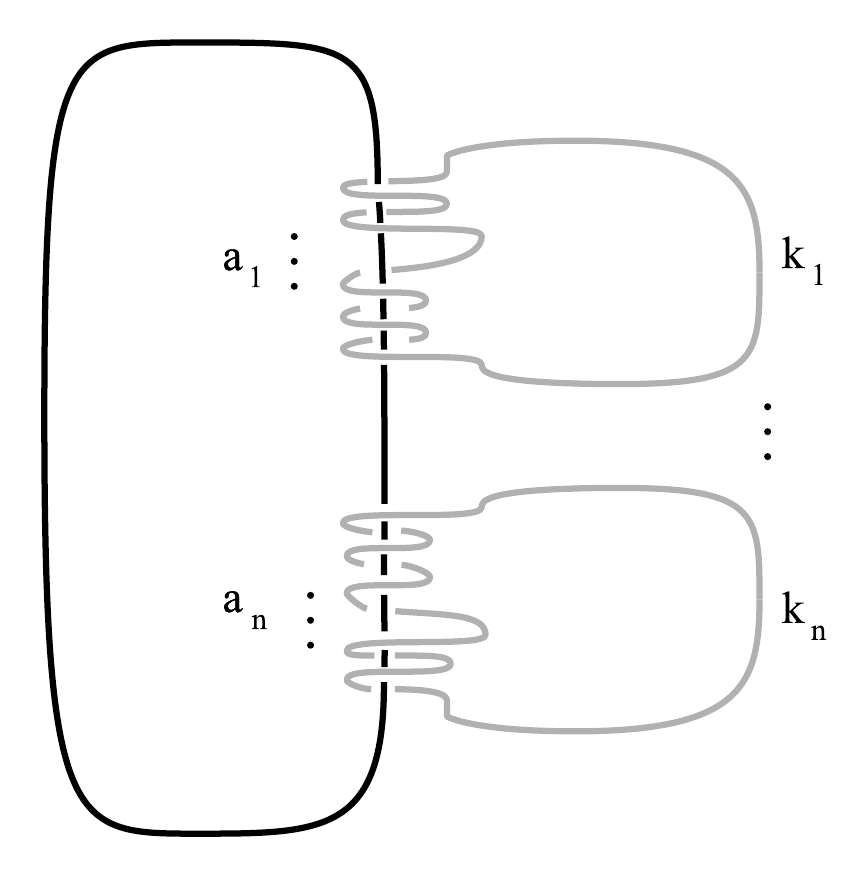}
    \caption{A state diagram with $k_1, \ldots, k_n$ identical essential unknots, winding $a_1, \ldots, a_n$ times around the ${\rm O}$-curve, respectively.}
    \label{fig:state-O-mixed}
\end{figure}

We may now define:

\begin{definition}\label{mix-pkaufbst}\rm
Let ${\rm O}\cup K$ be an oriented ${\rm O}$-mixed pseudo link diagram. The \textit{${\rm O}$-mixed pseudo bracket polynomial} of ${\rm O}\cup K$ is a 4-variable Laurent polynomial in $\mathbb{Z}\left[A^{\pm 1}, V, H, s \right]$,  defined by means of the same inductive rules as the ones for the annular mixed pseudo bracket (Definition~\ref{pkaufbst}), except for the diagram in the fifth rule in Figure~\ref{pkbst} which is substituted by the planar diagram in Figure~\ref{fig:enter-label}, where the $k$-component unlink is linked with the fixed unknot~${\rm O}$. Also, $L$ in the fourth rule stands now for an  oriented ${\rm O}$-mixed pseudo link diagram. 
\end{definition}

Further, in analogy to the universal  annular pseudo bracket polynomial  we have the  \textit{universal ${\rm O}$-mixed pseudo bracket polynomial} with infinitely many variables $s_k$, for $k\in \mathbb{N}$, in place of $s$.

\begin{figure}[H]
    \centering
    \includegraphics[width=0.3\linewidth]{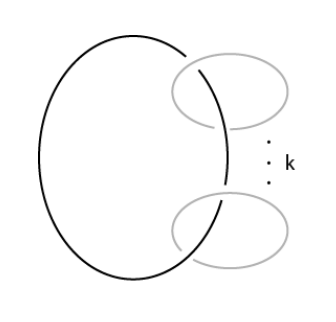}
    \caption{$k$ essential unknotted curves in the ${\rm O}$-mixed pseudo link setting.}
    \label{fig:enter-label}
\end{figure}

We then have for planar ${\rm O}$-mixed pseudo links:

\begin{theorem}\label{th:an-mix-in}
The (universal) ${\rm O}$-mixed pseudo bracket is invariant under regular isotopy of (oriented) ${\rm O}$-mixed pseudo links and it is equivalent to the (universal) annular pseudo bracket polynomial for annular pseudo links. 
\end{theorem}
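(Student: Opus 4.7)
The theorem has two parts: regular isotopy invariance of the ${\rm O}$-mixed pseudo bracket, and its equivalence with the annular pseudo bracket under the bijection between annular and ${\rm O}$-mixed pseudo link diagrams recalled in Subsection~\ref{sec:prel-annular}. Our plan is to dispatch the invariance first and then read off the equivalence from a term-by-term matching of skein trees.

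\medskip

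\noindent \emph{Invariance.} By Theorem~\ref{Omixedreid}, regular isotopy of ${\rm O}$-mixed pseudo link diagrams is generated by the moves R0, R2, R3, PR2, PR3 on the moving part together with the mixed moves MR2, MR3 and MPR3. The inductive rules (1)--(4) of Definition~\ref{mix-pkaufbst} coincide locally with those of Definition~\ref{pkaufbst}, so invariance under the first group is obtained by transcribing verbatim the argument behind Proposition~\ref{annular_regular_invariance} (i.e.\ \cite[Proposition~5.5]{D}). For each mixed move we would perform a direct local computation: for MR2, resolving the two mixed crossings by rule (3) yields the standard four Kauffman states, in which the spurious loop contributes the factor $d = -A^2 - A^{-2}$ and cancels against the $A^2 + A^{-2}$ coefficient of the mixed terms, returning the original diagram; for MR3, the usual Kauffman trick reduces invariance to MR2 plus planar isotopy; for MPR3, we resolve the precrossing by rule (4) into a $V$-term and an $H$-term and observe that each is related to its counterpart on the other side of MPR3 by an MR2 (or MR3) move, whose invariance has just been established.

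\medskip

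\noindent \emph{Equivalence.} The bijection between annular pseudo link diagrams and ${\rm O}$-mixed pseudo link diagrams is local away from the fixed ${\rm O}$-curve and preserves classical crossings and precrossings. Hence the skein tree of an annular pseudo link diagram $K$ in $\mathcal{A}$ and that of the corresponding ${\rm O}\cup K$ in $S^3$ are isomorphic as rooted trees with identical monomial coefficients on matching edges. A leaf of the annular tree is a disjoint union of $k$ essential unknots and $m$ null-homotopic ones, evaluated to $s^k d^{m-1}$ by rules (1), (2) and (5) of Definition~\ref{pkaufbst}; the corresponding ${\rm O}$-mixed leaf is the diagram of Figure~\ref{fig:enter-label} disjoint union with $m$ null-homotopic unknots, which by the analogous rules of Definition~\ref{mix-pkaufbst} evaluates to the same monomial. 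Summing with matching coefficients over the isomorphic trees yields $\langle {\rm O}\cup K \rangle = \langle K \rangle_\mathcal{A}$. The universal case follows by substituting $s_k$ for $s^k$ throughout, with the same tree-matching argument.

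\medskip

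\noindent \emph{Main obstacle.} The delicate point is that a generic ${\rm O}$-mixed state may contain essential components winding $a_i \geq 2$ times around ${\rm O}$, as in Figure~\ref{fig:state-O-mixed}, a configuration not directly covered by rule (5). We would handle this by appealing to the Kauffman bracket skein module of the solid torus \cites{D3,Bo}: applying the classical skein relation inside each multi-winding component rewrites it as a polynomial in diagrams of the form of Figure~\ref{fig:enter-label}. Verifying that this rewriting commutes with the corresponding reduction of isotopic multi-winding essential curves in the annulus, so that equal monomials in $s^k$ (or $s_k$) are produced on both sides, is the main technical step underpinning the tree-matching in the equivalence argument.
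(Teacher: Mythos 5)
Your treatment of the mixed move MR2 contains a genuine error. You propose to ``resolve the two mixed crossings by rule (3)'' and run the standard Kauffman $d=-A^2-A^{-2}$ cancellation. But Definition~\ref{mix-pkaufbst} explicitly excludes mixed crossings from the inductive rules: the component ${\rm O}$ represents the complementary solid torus and must remain fixed, so applying the smoothing rule to a mixed crossing is not a legal step in computing the polynomial (and would moreover cut-and-rejoin the moving strand with ${\rm O}$, producing diagrams that are no longer ${\rm O}$-mixed pseudo link diagrams). The correct argument is the opposite, and simpler, observation made in the paper: precisely \emph{because} mixed crossings are never smoothed, the bracket cannot detect MR2 --- the states of the two diagrams differ only by the MR2 move itself, which changes neither the null-homotopic circles nor the winding of the essential state components around ${\rm O}$, so every state is evaluated identically via Figure~\ref{fig:enter-label} and rule (5). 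Since your MR3 and MPR3 arguments (smoothing the genuine moving crossing, resp.\ precrossing, and then invoking MR2) rest on this MR2 step, they are fine in structure --- and coincide with the paper's Figure~\ref{kf1} argument --- but the foundation must be replaced as above.

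The remainder of your proposal is consistent with the paper. Invariance under R2, R3, PR2, PR3 by restriction of the planar/annular argument, the reduction of MR3 and MPR3 to MR2, and the equivalence with the annular bracket via the diagrammatic correspondence of Theorem~\ref{Omixedreid} (which you flesh out as a term-by-term matching of skein trees, where the paper simply says the equivalence ``follows immediately'') are all in line with the paper's proof. Your ``main obstacle'' paragraph on states with components winding several times around ${\rm O}$ is also the right concern; the paper disposes of it before the definition, via the remark accompanying Figures~\ref{fig:state-O-mixed} and~\ref{fig:enter-label} and the references \cites{D3,Bo} on the Kauffman bracket skein module of the solid torus, rather than inside the proof. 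One small caveat: Theorem~\ref{Omixedreid} as stated concerns full isotopy (including R1 and PR1); for the regular-isotopy statement you should say explicitly that you use its restriction obtained by discarding R1 and PR1, which is how regular isotopy is defined in this setting.
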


\begin{proof}
We consider the restriction of the theory of planar pseudo links to the subset of ${\rm O}$-mixed pseudo links. Then, by Proposition~\ref{regular_invariance}, the (oriented) ${\rm O}$-mixed pseudo bracket polynomial is invariant under the classical Reidemeister moves R2 and R3 and under the pseudo Reidemeister moves PR2 and PR3. Regarding now the mixed Reidemeister moves (recall Figure~\ref{mpr}), we firstly observe that: as there is no rule for smoothing mixed crossings, the moves MR2 is undetectable by the ${\rm O}$-mixed pseudo bracket, so it remains invariant. Invariance under the moves MR3 and MPR3 follows immediately by applying the respective skein rule for smoothing the (pre)crossing and employing the moves MR2, as illustrated in Figure~\ref{kf1} for either type of crossings and with the coefficients omitted. 

Finally, the equivalence of the ${\rm O}$-mixed pseudo bracket polynomial for  ${\rm O}$-mixed pseudo links to the annular pseudo bracket polynomial for annular pseudo links follows immediately by Theorem~\ref{Omixedreid}.
\end{proof}

\begin{figure}[H]
\begin{center}
\includegraphics[width=4.5in]{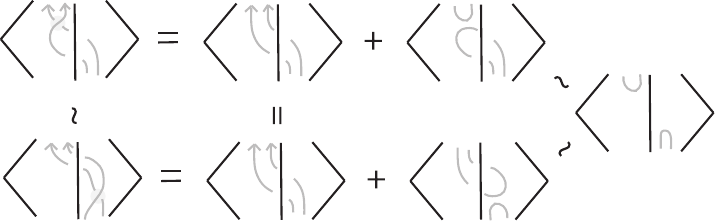}
\end{center}
\vspace{8pt}
\caption{The invariance of the ${\rm O}$-mixed pseudo bracket under the mixed Reidemeister moves MR3 and MPR3.}
\label{kf1}
\end{figure}

\subsection{Jones-type analogues}

Similarly to the case of planar pseudo links, we may normalize the annular and the ${\rm O}$-mixed pseudo bracket polynomials in order to satisfy the Reidemeister moves R1 and PR1 and obtain invariants for annular pseudo links and their corresponding pseudo links in the solid torus resp. for ${\rm O}$-mixed pseudo links. For this, we first define in analogy to the writhe of a planar pseudo link diagram:

\begin{definition} \label{def:annular_writhe}
The \textit{writhe} of an oriented annular pseudo link diagram $L$, denoted as $w(L)$, is defined as the number of positive crossings minus the number of negative crossings of $L$ (recall Figure~\ref{sign}), while the precrossings do not contribute to the writhe. Furthermore, the ${\rm O}$-\textit{writhe} of an oriented ${\rm O}$-mixed pseudo link diagram ${\rm O}\cup L$, denoted $w({\rm O}\cup L)$, is defined as the number of positive crossings minus the number of negative crossings of $L$ (recall Figure~\ref{sign}, while the precrossings and the mixed crossings do not contribute to the ${\rm O}$-writhe. 
\end{definition}

We now show that the annular bracket polynomial is an invariant of annular pseudo links and pseudo links in the solid torus, and that its ${\rm O}$-mixed bracket polynomial counterpart, is an invariant of ${\rm O}$-mixed pseudo links.

\begin{theorem}\label{th:an-bracket}
Let $L$ be an oriented annular pseudo link diagram. The normalized annular pseudo bracket polynomial  is a 3-variable Laurent polynomial in the ring $\mathbb{Z}\left[A^{\pm 1},V,s\right]$, defined as:
$$P_L(A,V,s)\ =\ (-A^{-3})^{w(L)}\, \langle L \rangle_\mathcal{A}$$
\noindent where $\langle L \rangle_\mathcal{A}$ denotes the annular pseudo bracket polynomial of $L$ for $H = 1-Vd$, and it is an isotopy invariant of annular pseudo links and their corresponding pseudo links in the solid torus. Likewise for the universal normalized annular pseudo bracket polynomial $P_{{\rm O}\cup L}(A,V,s_k)$ for $k\in \mathbb{N}$. 
\smallbreak
\noindent Furthermore, for ${\rm O}\cup L$ an oriented ${\rm O}$-mixed pseudo link diagram, the normalized ${\rm O}$-mixed pseudo bracket polynomial  is a 3-variable Laurent polynomial in the ring $\mathbb{Z}\left[A^{\pm 1},V,s\right]$, defined as:

\[
P_{{\rm O}\cup L}(A,V,s)\ =\ (-A^{-3})^{w({\rm O}\cup L)}\, \langle {\rm O}\cup L \rangle,
\]

\noindent where $\langle{\rm O}\cup L \rangle$ denotes the ${\rm O}$-mixed pseudo bracket polynomial of ${\rm O}\cup L$ for $H = 1-Vd$, and it is an isotopy invariant of ${\rm O}$-mixed pseudo links  and their corresponding spatial ${\rm O}$-mixed pseudo links. Likewise for the universal normalized  ${\rm O}$-mixed pseudo bracket polynomial $P_{{\rm O}\cup L}(A,V,s_k)$ for $k\in \mathbb{N}$. 
\end{theorem}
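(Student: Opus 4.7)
The plan is to mirror, in both the annular and the ${\rm O}$-mixed settings, the normalization scheme that produced Theorem~\ref{th:pl-bracket} from Proposition~\ref{regular_invariance} in the planar case. By Proposition~\ref{annular_regular_invariance}, resp.\ Theorem~\ref{th:an-mix-in}, the (universal) annular, resp.\ ${\rm O}$-mixed, pseudo bracket polynomial is already a regular isotopy invariant, so only the moves R1 and PR1 remain to be addressed; once those are handled, Theorems~\ref{isopST} and \ref{Omixedreid} will automatically upgrade diagrammatic invariance to invariance of pseudo links in the solid torus and of spatial ${\rm O}$-mixed pseudo links, respectively.

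For R1, I would reproduce Kauffman's classical kink computation \cite{Kauffman}: resolving a positive, resp.\ negative, classical kink via rule~(3) of Definition~\ref{pkaufbst} extracts a global factor $-A^{3}$, resp.\ $-A^{-3}$, while the writhe changes by $\pm 1$, so the factor $(-A^{-3})^{w(L)}$ cancels it exactly. The resolution is local, hence does not disturb any essential-unknot count in the states, so the $s$-weights (and the $s_k$-weights in the universal case) are preserved; and because R1 only acts on the moving part, the same argument applies verbatim to $P_{{\rm O}\cup L}$. For PR1, I would invoke the argument of \cite{HD}: resolving a pseudo kink via the first two rules of Definition~\ref{pkaufbst} yields terms proportional to $Vd$ and to $H$ on the otherwise unchanged diagram, and the specialization $H=1-Vd$ forces their sum to equal the diagram itself. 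Since this computation takes place inside a disc, it is insensitive to whether the surrounding diagram is planar, contains essential unknots, or links around the fixed ${\rm O}$-component. Moreover, by Definition~\ref{def:annular_writhe} precrossings do not contribute to either $w(L)$ or $w({\rm O}\cup L)$, so the writhe factor is unchanged by PR1 and no further correction is needed.

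Assembling these ingredients gives full diagrammatic isotopy invariance: $P_L(A,V,s)$ is invariant under R0, R1, R2, R3, PR1, PR2, PR3, and hence by Theorem~\ref{isopST} descends to an invariant of pseudo links in the solid torus; $P_{{\rm O}\cup L}(A,V,s)$ is invariant under the corresponding moves together with MR2, MR3 and MPR3, which are already absorbed into Theorem~\ref{th:an-mix-in} and which leave the ${\rm O}$-writhe unchanged because mixed crossings are excluded from it. The universal versions require no new ingredient, as R1 and PR1 are strictly local moves that preserve the $s_k$-weight of every state. The only piece of bookkeeping that might look delicate is verifying the PR1 specialization when states carry essential unknotted components winding arbitrary numbers of times around ${\rm O}$; but since the pseudo kink and its two smoothings are confined to a small disc disjoint from those windings, this reduces verbatim to the disc-local calculation of \cite{HD}, and therefore presents no genuine obstacle.
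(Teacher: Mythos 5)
Your proposal is correct and follows essentially the same route as the paper's proof: build on the already-established regular isotopy invariance (Proposition~\ref{annular_regular_invariance} and Theorem~\ref{th:an-mix-in}), cancel the R1 kink factor $-A^{\pm 3}$ with the writhe normalization $(-A^{-3})^{w(\cdot)}$, force $H=1-Vd$ for PR1 as in \cite{HD}, and pass to the solid torus and spatial ${\rm O}$-mixed settings via Theorems~\ref{isopST} and~\ref{Omixedreid}. You spell out the disc-locality of the R1 and PR1 resolutions (so that the $s$- and $s_k$-weights are untouched) more explicitly than the paper does, but this is elaboration rather than a different argument.
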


\begin{proof}
As in the case of the planar pseudo bracket polynomial (Theorem~\ref{th:pl-bracket}), multiplying the annular pseudo bracket polynomial  with the  writhe correction factor $\left( -A^{-3}\right) ^{w(L)}$ ensures invariance under Reidemeister moves R1, while retaining invariance under regular isotopy, since the writhe is invariant of regular isotopy. For the invariance under the move PR1, the specialization $H = 1-Vd$ must be forced, as argued in \cite{HD} and discussed in Subsection~\ref{sec:planar-Jones}. So, the polynomial $P$ is an invariant of annular pseudo links, analogous to the normalized pseudo bracket for planar pseudo links. 

The invariance of the normalized ${\rm O}$-mixed pseudo bracket polynomial follows analogously from the definition of ${\rm O}$-mixed pseudo bracket (Theorem~\ref{th:an-bracket}), the definition of the ${\rm O}$-writhe (Definition~\ref{def:annular_writhe} and Theorem~\ref{th:pl-bracket} for planar pseudo links. 
\end{proof}

\begin{definition}
Applying the variable substitution $A= t^{-1/4}$ in the polynomial $P_L(A, V, s)$ we obtain the equivalent {\it annular pseudo Jones polynomial}, $P_L(t^{-1/4}, V, s)$, which is an isotopy invariant of annular pseudo links and their corresponding pseudo links in the solid torus. Similarly, the same variable substitution in the  normalized ${\rm O}$-mixed pseudo bracket polynomial  $P_{{\rm O}\cup L}(A,V,s)$ gives rise to the \textit{${\rm O}$-mixed pseudo Jones polynomial}, $P_{{\rm O}\cup L}(t^{-1/4},V,s)$, an isotopy invariant of ${\rm O}$-mixed pseudo links and their spatial analogues. 
\end{definition}

\begin{remarks}
\,\\
(a) As in the case of planar pseudo links, the  variable $V$ witnesses the presence of precrossings. In the absence of precrossings, the polynomials $P_L(A, V, s)$ and $P_{{\rm O}\cup L}(A,V,s)$ specialize to the analogue of the normalized  bracket (or the Jones polynomial for $A= t^{-1/4}$) for links in the solid torus (cf. \cites{HP,La2}). 
\smallbreak
\noindent (b) The new variable $s$ in the annular pseudo bracket resp. the ${\rm O}$-mixed pseudo bracket, which remains unchanged in the normalized  polynomials $P_L(A, V, s)$ and $P_{{\rm O}\cup L}(A,V,s)$, distinguishes essential annular pseudo links from non-essential ones (i.e. ones that can be isotoped in an inclusion disc). Clearly, for non-essential annular pseudo links, the annular normalized  polynomials $P_L(A, V, s)$ and $P_{{\rm O}\cup L}(A,V,s)$ specialize to the corresponding planar one  $P_L(A, V)$.
\end{remarks}

In view of the above remarks, the reader may compare the polynomials of the planar pseudo trefoil of Example~\ref{exampleptrefoil} and its essential analogue of Example~\ref{exampleannularptrefoil}, as well as of the classical trefoil or the unknot and their essential analogues.

\section{The theory of toroidal pseudo links}\label{sec:toroidal}

In this section, we recall essential results on toroidal pseudo links as presented in  \cite{DLM3}. In particular we recall the bijections between toroidal pseudo links and their lift in the thickened torus, as well as with the  ${\rm H}$-mixed pseudo links in the three-sphere. We then define the bracket polynomial for toroidal pseudo links and for ${\rm H}$-mixed  pseudo links and prove that they are invariants of regular isotopy. We also normalize these polynomials  to the corresponding Jones-type polynomials, which we prove to be topological invariants of toroidal pseudo links, pseudo links in the thickened torus and ${\rm H}$-mixed  pseudo links in the three-sphere, respectively.

\subsection{Preliminaries}\label{sec:prel-toroidal}

A {\it toroidal pseudo link diagram} is defined as a pseudo link diagram in the torus $T^2$. An example is illustrated in Figure~\ref{pkttorus}. A \textit{toroidal pseudo link} is an equivalence class of toroidal pseudo link diagrams related by a finite sequence of surface isotopy moves in $T^2$, the classical Reidemeister moves and the pseudo Reidemeister moves (recall Figure~\ref{reid}). This equivalence relation between toroidal pseudo link diagrams is called {\it isotopy}. Further, according to Definition~\ref{def:pseudo-regular}, {\it regular isotopy } among toroidal pseudo link diagrams is induced by all the above moves with the exception of the moves R1 and PR1.

\begin{figure}[H]
\begin{center}
\includegraphics[width=1.8in]{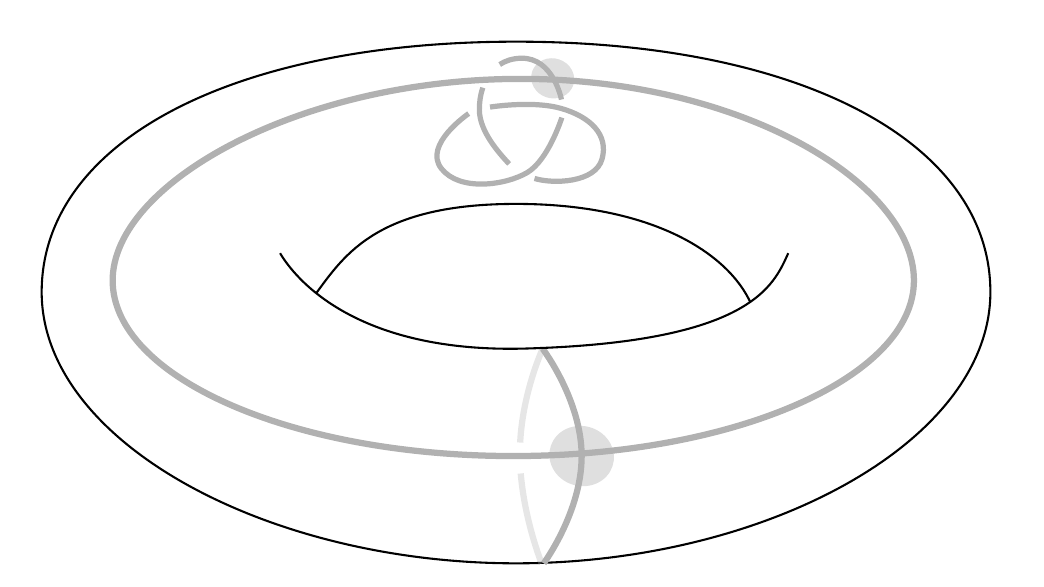}
\end{center}
\caption{A toroidal pseudo link.}
\label{pkttorus}
\end{figure}

In analogy to the theory of annular pseudo links, toroidal pseudo links can be studied in different topological contexts. Recall first that the {\it lift} of a toroidal pseudo link diagram in the thickened torus $T^2 \times I$, where $I$ denotes the unit interval $[0,1]$, is defined as a collection of closed curve(s) constrained to the interior of $T^2 \times I$ consisting of embedded discs from which emanate embedded arcs \cite{DLM3}. More specifically, each classical crossing of the diagram is embedded in a sufficiently small 3-ball that lies entirely within $T^2 \times I$, while precrossings are supported by sufficiently small rigid discs, which are embedded in $T^2 \times I$. Note also that the simple arcs connecting the (pre)crossings can be replaced by isotopic ones in $T^2 \times I$. The resulting lift is called a {\it pseudo link in the thickened torus}. For an illustration of the lift of a toroidal pseudo link diagram to a pseudo link in the thickened torus, we refer to Figure~\ref{pthtor}. 

\begin{figure}[H]
\begin{center}
\includegraphics[width=2.2in]{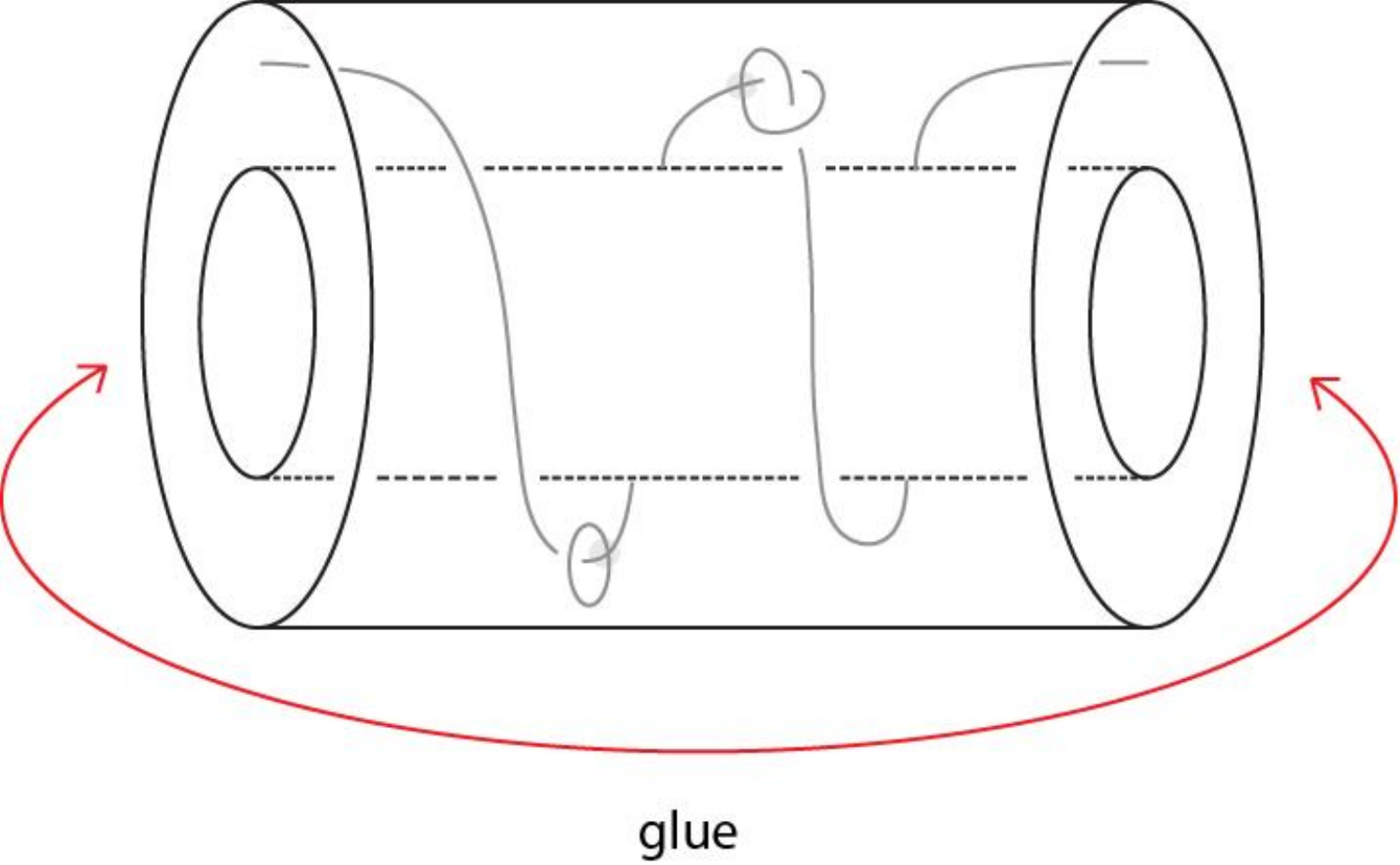}
\end{center}
\caption{The lift of a toroidal pseudo link to a pseudo link in $T^2 \times I$.}
\label{pthtor}
\end{figure}

Two (oriented) pseudo links in the thickened torus are said to be {\it (ambient) isotopic} if they are related by isotopies of arcs and discs that are confined to the interior of the thickened torus. In \cite{DLM3} we proved the following theorem:

\begin{theorem}\label{isopTT}
Two (oriented) pseudo links in the thickened torus $T^2 \times I$ are (ambient) isotopic if and only if any two corresponding toroidal pseudo link diagrams of theirs, projected onto the torus $T^2 \times \{0\}$, are related by surface isotopies, classical Reidemeister moves and the pseudo Reidemeister moves.
\end{theorem}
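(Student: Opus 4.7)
The proof will closely parallel the argument for the solid torus case (Theorem~\ref{isopST}), which is itself the standard Reidemeister-type argument adapted to pseudo links as in the three-space case. The plan is to establish the two implications separately.

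For the forward implication, I would verify that each of the listed moves lifts to an ambient isotopy of the corresponding pseudo link in $T^2 \times I$. A surface isotopy of $T^2$ extends to a product isotopy on $T^2 \times I$. Each classical Reidemeister move R1, R2, R3 is supported in a 3-ball inside $T^2 \times I$, and the standard local models apply verbatim. For each pseudo Reidemeister move PR1, PR2, PR3, the move is realized by a local isotopy of the rigid disc supporting the precrossing together with the incident arcs: PR1 corresponds to collapsing a small kink against the disc, PR2 to sliding the disc past a transverse strand, and PR3 to sliding the disc past a classical crossing or past another precrossing disc. All these local models fit inside a 3-ball contained in $T^2 \times I$.

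For the converse, suppose $L_0$ and $L_1$ are pseudo links in $T^2 \times I$ related by an ambient isotopy $\{f_t\}_{t\in[0,1]}$. Composing with the projection $p : T^2 \times I \to T^2 \times \{0\}$ gives a one-parameter family of images on $T^2$. By a standard general position argument in $T^2 \times I$, I would perturb $\{f_t\}$ (keeping the endpoints fixed up to a final projection) so that the family $\{p \circ f_t(L_0)\}_{t\in[0,1]}$ is generic in the following sense: for all but finitely many times $t$ it is a valid toroidal pseudo link diagram, and at each exceptional time it undergoes a single codimension-one singularity. Away from the precrossing discs, the classification of such singularities is the classical one and produces surface isotopy events together with R1, R2 and R3. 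For the precrossing discs, since they are rigid and carry no over/under data, the only generic local events are: a small kink collapsing against the disc (PR1), the disc crossing a transverse strand (PR2), and the disc crossing a classical crossing or another precrossing disc (PR3). Concatenating these local moves reconstructs the isotopy between the two diagrams.

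The main obstacle is the generic-position analysis near the rigid discs of precrossings: one must ensure that during the perturbation of the isotopy the discs remain rigid and that the only codimension-one events involving them are the three PR-moves, with no extra moves forced by the nontrivial topology of $T^2$. This is handled by observing that the projection $p$ is locally trivial, so every generic event occurs inside a small 3-ball in $T^2 \times I$ and therefore reduces to one of the local models already listed; any global effect coming from winding around the longitude or meridian of $T^2$ is absorbed into the surface isotopy moves on $T^2$. This is exactly the reason why the statement does not require any additional moves beyond those in Figure~\ref{reid}, and it is the precise analogue of the solid torus case, with $\mathcal{A}$ replaced throughout by $T^2$.
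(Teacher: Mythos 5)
This statement is not reproved in the present paper; it is recalled from \cite{DLM3}, and the proof there follows the route you outline (each move lifts to an isotopy of the lift; conversely, put the ambient isotopy in general position with respect to the projection onto $T^2\times\{0\}$ and classify the codimension-one events). So your strategy is the right one, but as written it has two genuine gaps. The first concerns PR1. Unlike PR2 and PR3, the move PR1 deletes a precrossing altogether (this is precisely why the pseudo bracket only respects PR1 after the specialization $H=1-Vd$), so on the level of lifts it deletes one of the rigid supporting discs. Hence PR1 is \emph{not} ``realized by a local isotopy of the rigid disc supporting the precrossing together with the incident arcs'': no isotopy of arcs and discs, confined to $T^2\times I$ or not, changes the number of discs. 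For the same reason PR1 can never occur as a generic event in your converse argument, since the ambient isotopy carries the discs along. A correct proof must confront how the equivalence of pseudo links in $T^2\times I$ used in \cite{DLM3} accommodates creation/annihilation of a disc supporting a nugatory precrossing; with the definition recalled in this paper (isotopies of arcs and discs), your forward direction for PR1 simply fails, and your converse misstates which moves can arise.

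The second gap is in the general-position analysis near the rigid discs. Besides a strand sweeping over or under a disc and the triple-point-type events (the events you assign to PR2 and PR3), a disc may rotate during the isotopy so that at isolated times it is edge-on to the projection direction; after such a flip the four emanating arcs are permuted and acquire a twist near the precrossing. This is the analogue of the twist move in the rigid-vertex graph calculus of \cite{K3}, and it is not one of the moves of Figure~\ref{reid}; for rigid-vertex graphs it is an independent generator, so it cannot be dismissed by locality alone. You must either show that this twist at a precrossing is a consequence of the listed R- and PR-moves (this is where the real work specific to the rigid-disc setting lies), or normalize the ambient isotopy so that all discs remain parallel to $T^2\times\{0\}$ throughout and justify that this can be done. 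Your appeal to local triviality of the projection does not address this point. By contrast, the torus-specific part of your argument is fine: since the ambient isotopy starts at the identity, all global effects of winding around the meridian or longitude are indeed absorbed into surface isotopies of $T^2$, exactly as in the annular case of Theorem~\ref{isopST}.
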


Another approach to the theory of toroidal pseudo links is to represent toroidal pseudo link diagrams, resp. pseudo links in the thickened torus, as mixed pseudo link diagrams in the plane, resp. as mixed pseudo links in the three-sphere $S^3$, using a similar approach as the one used to study annular pseudo links. In the case of pseudo links in the thickened torus,  we can see the (fixed) thickened torus as being homeomorphic to the complement of the Hopf link, ${\rm H}$ in the three-sphere $S^3$, with marked components (say $m$ and $l$) as illustrated in the middle of Figure~\ref{tor}. Then, an (oriented) pseudo link in $T^2 \times I$ can be represented uniquely by an (oriented) mixed link in $S^3$, whose fixed part is the (marked) Hopf link ${\rm H}$, representing the thickened torus. For an example of a toroidal pseudo link and its corresponding ${\rm H}$-mixed pseudo link, we refer to Figure~\ref{tor}. 

\begin{figure}[H]
\begin{center}
\includegraphics[width=4.5in]{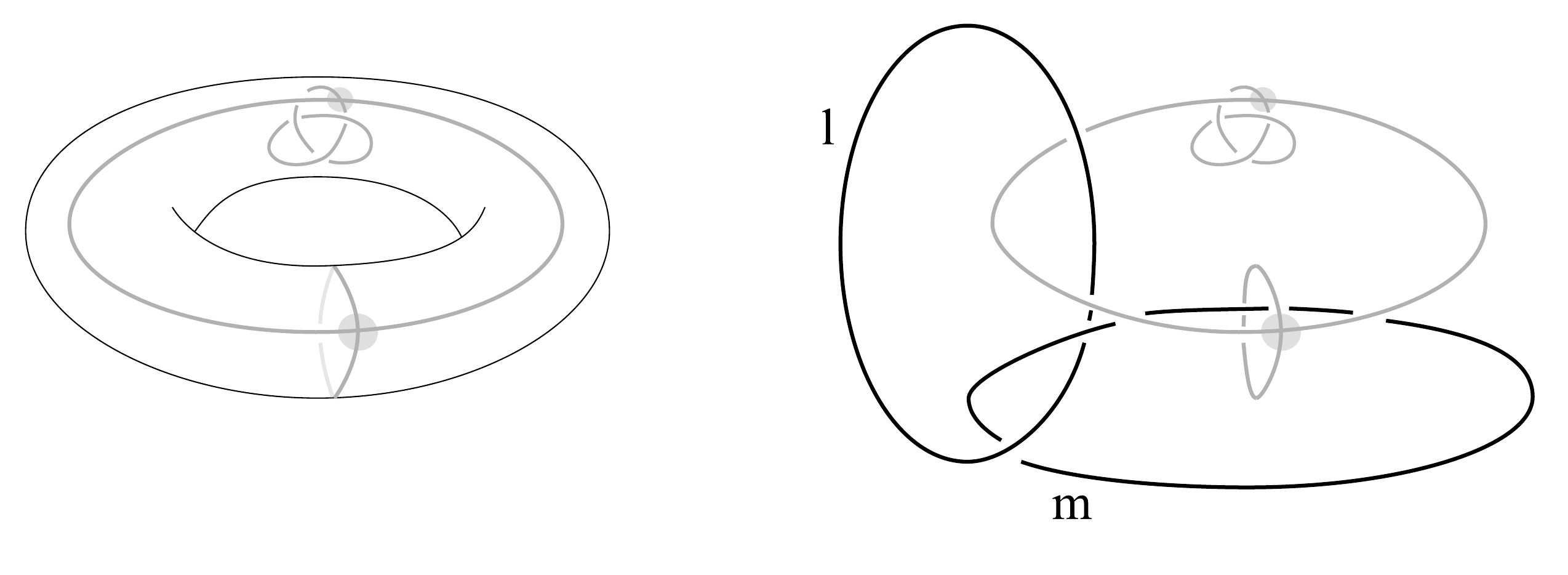}
\end{center}
\caption{A toroidal pseudo link and its corresponding ${\rm H}$-mixed pseudo link.}
\label{tor}
\end{figure}

The isotopy of ${\rm H}$-mixed pseudo links can be translated on the diagrammatic level, recalling that the fixed part of the ${\rm H}$-mixed pseudo links involves a crossing, which a moving strand can freely cross, giving rise to an extra mixed Reidemeister 3 move with one moving arc, as exemplified in Figure~\ref{mr3}. Thus, it follows that:

\begin{theorem}[\cite{DLM3}] \label{Hmixedreid}
Two oriented ${\rm H}$-mixed pseudo links in $S^{3}$ are isotopic if and only if any two oriented ${\rm H}$-mixed pseudo link diagrams of theirs differ by planar isotopies, a finite sequence of the classical and the pseudo Reidemeister moves for the moving parts of the mixed pseudo links (recall Figure~\ref{reid}), and moves that involve the fixed and the moving parts, called {\rm mixed Reidemeister moves}, comprising the moves MR2, MR3, and MPR3, as exemplified in Figure~\ref{mpr} and the mixed R3 moves, as exemplified in Figure~\ref{mr3}. 
\end{theorem}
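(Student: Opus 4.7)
The plan is to derive Theorem~\ref{Hmixedreid} from the already-established generalized Reidemeister theorem for pseudo links in the thickened torus (Theorem~\ref{isopTT}) together with the diagrammatic correspondence between toroidal pseudo link diagrams on $T^{2}$ and planar ${\rm H}$-mixed pseudo link diagrams in the plane; the strategy mirrors the proof of Theorem~\ref{Omixedreid} for the ${\rm O}$-mixed case, with the added complication that the fixed part now carries internal crossings (coming from the standard Hopf-link diagram). For the reverse direction I would check that each listed move is realized by an ambient isotopy of $S^{3}$ fixing ${\rm H}$ pointwise: planar isotopy and the pure-moving-part moves R1--R3, PR1--PR3 are immediate; MR2, MR3, MPR3 are exactly the moves from \cite{LR1} and Theorem~\ref{Omixedreid}, applied independently to each of the two fixed Hopf components; and the additional mixed R3 move of Figure~\ref{mr3}, in which a moving arc slides across the visible crossing of the two Hopf components, is realized inside $S^{3}\setminus N({\rm H})$ by pushing that arc across a small triangular disc two of whose sides lie on ${\rm H}$.

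For the forward direction, given an ambient isotopy between two ${\rm H}$-mixed pseudo links that keeps ${\rm H}$ pointwise fixed, the moving part can be viewed as a pseudo link in $T^{2}\times I\cong S^{3}\setminus N({\rm H})$. Theorem~\ref{isopTT} then converts this isotopy into a finite sequence of surface isotopies on $T^{2}\times\{0\}$ and of classical or pseudo Reidemeister moves R1--R3, PR1--PR3 on the toroidal diagram. I would translate this sequence through the bijection between toroidal pseudo link diagrams and planar ${\rm H}$-mixed pseudo link diagrams: the Reidemeister-type moves project directly to the analogous moves on the moving part of the planar diagram, while the surface isotopies translate, after a transversality argument, either to planar isotopies of the moving part or to events in which a moving arc sweeps across a meridian or longitude of ${\rm H}$. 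Generically such sweeps produce MR2, MR3, and MPR3 moves, while the event in which the sweep occurs at the (unique, transverse) meridian--longitude intersection yields the additional mixed R3 move of Figure~\ref{mr3}.

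The main obstacle will lie in this last step: one must verify that in a generic one-parameter family of projections to the plane (with ${\rm H}$ held fixed), the only new events beyond those of the ${\rm O}$-mixed case are those in which a moving arc sweeps across the fixed Hopf crossing, and that each such event admits a local model producing exactly the mixed R3 of Figure~\ref{mr3} rather than something more exotic, for instance a simultaneous interaction with both Hopf components or with a crossing of the moving part. A transversality and local-model argument, analogous to that underlying Theorem~\ref{Omixedreid} but with the standard Hopf-link diagram now included as part of the background that generic one-parameter families must be perturbed to avoid, is what handles these cases and completes the translation.
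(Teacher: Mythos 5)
The paper does not actually prove this statement here: it is quoted verbatim from \cite{DLM3}, where the proof is an adaptation of the mixed-link machinery of \cite{LR1} to the Hopf-link complement. Your reconstruction follows essentially that expected route -- realizability of each listed move by an isotopy of $S^{3}\setminus N({\rm H})$ in one direction, and in the other a general-position analysis of a one-parameter family of projections with ${\rm H}$ held fixed, in which the only events beyond the pure Reidemeister and ${\rm O}$-mixed ones are sweeps of a moving arc across the fixed crossing of the Hopf diagram, yielding the extra mixed R3 move -- so in spirit it matches the source proof. Two small remarks. First, your detour through Theorem~\ref{isopTT} (isotopy in $T^{2}\times I$ $\Rightarrow$ toroidal diagram moves $\Rightarrow$ planar ${\rm H}$-mixed moves) is legitimate but slightly indirect: the transversality work you defer to the last step is exactly the same work one does by projecting the isotopy in $S^{3}\setminus N({\rm H})$ directly onto the plane of the ${\rm H}$-mixed diagram, which is how the \cite{LR1}-style argument is usually organized; translating a surface isotopy of $T^{2}$ into planar moves is precisely where arcs sweep across the meridian and longitude discs of the two complementary solid tori, i.e.\ across the fixed components. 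Second, your local model for the mixed R3 is imprecisely phrased: the disc swept by the moving arc has boundary consisting of the old and new positions of that arc (its projection covers the fixed crossing), and it lies entirely above or below both fixed strands; it is not a triangle with two sides on ${\rm H}$. Neither point is a gap in the strategy, and the obstacle you flag (ruling out more exotic codimension-one events involving the fixed crossing) is indeed the content that the general-position argument of \cite{LR1}/\cite{DLM3} supplies.
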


\begin{figure}[H]
\begin{center}
\includegraphics[width=2.5in]{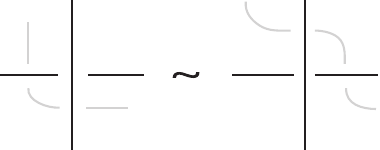}
\end{center}
\vspace{8pt}
\caption{A mixed R3 move.}
\label{mr3}
\end{figure}

It follows from the above that, {\it two oriented toroidal pseudo links diagrams are isotopic if and only if any two corresponding oriented ${\rm H}$-mixed pseudo link diagrams  are isotopic.}

\subsection{The toroidal pseudo bracket polynomial}

The pseudo bracket polynomial, defined as a 3-variable Laurent polynomial for planar pseudo links in Definition~\ref{pkaufb} and as a 4-variable Laurent polynomial for annular pseudo links in Definition~\ref{pkaufbst}, can also be extended to the setting of toroidal pseudo links. These are pseudo links whose diagrams lie in the surface of the torus $T^2$ and lift in the thickened torus. The main difference from the above two polynomials lies in the fact that when applying the skein relations to toroidal pseudo link diagrams, the essential unknotted curves that can appear in a state diagram are torus links. 

\smallbreak
A \textit{$(p,q)$-torus knot}, for $p$ and $q$ relatively prime integers, is an essential simple closed curve in the torus $T^2$ winding $p$ times around the longitude and $q$ times along the meridian of the torus. Similarly, a \textit{$(p',q')$-torus link} or a \textit{$(p,q)$-torus link on $l$ components} is a set of $l$ parallel copies of a $(p,q)$-torus knot, where $p'= l \cdot p$ and $q'= l \cdot q$. In our definition of torus knots and links, we include the pairs $(p,0)$ and $(0,q)$. 

\begin{remark}\label{rem:essentialpq}
A $(p,q)$-torus knot, considered as a classical knot in $S^3$, is equivalent a $(q,p)$-torus knot.  However, as essential curves embedded in the torus, these two knots are distinct in $T^2$. Yet, note that in $T^2$ a $(p,q)$-torus knot is the same as a $(-p,-q)$-torus knot, and similarly, a $(p,-q)$-torus knot is the same as a $(-p,q)$-torus knot. 
\end{remark}

All torus knots and links along with the standard null-homotopic unknot constitute initial conditions for the pseudo bracket polynomial of toroidal pseudo link diagrams. So, we assign a specific variable $s_{p,q}$ to each $(p,q)$-torus knot and we define the following: 

\begin{definition}\label{kbkntt}
Let $K$ be an oriented toroidal pseudo link diagram. The \textit{toroidal pseudo bracket polynomial} of $K$, denoted $\langle K \rangle_{T^2}$ or simply $\langle K \rangle$, is an infinite variable Laurent polynomial in the ring $\mathbb{Z}\left[A^{\pm 1}, V, H, s_{p,q} \right]$, for $p$ and $q$ pairs of coprime integers with $q$ non negative, defined inductively by means of the following rules, or $(p, q) \in \{(1,0), (0, 1)\}$, where a punctured plane in the 5th rule represents the torus:

\begin{figure}[H]
\begin{center}
\includegraphics[width=4.5in]{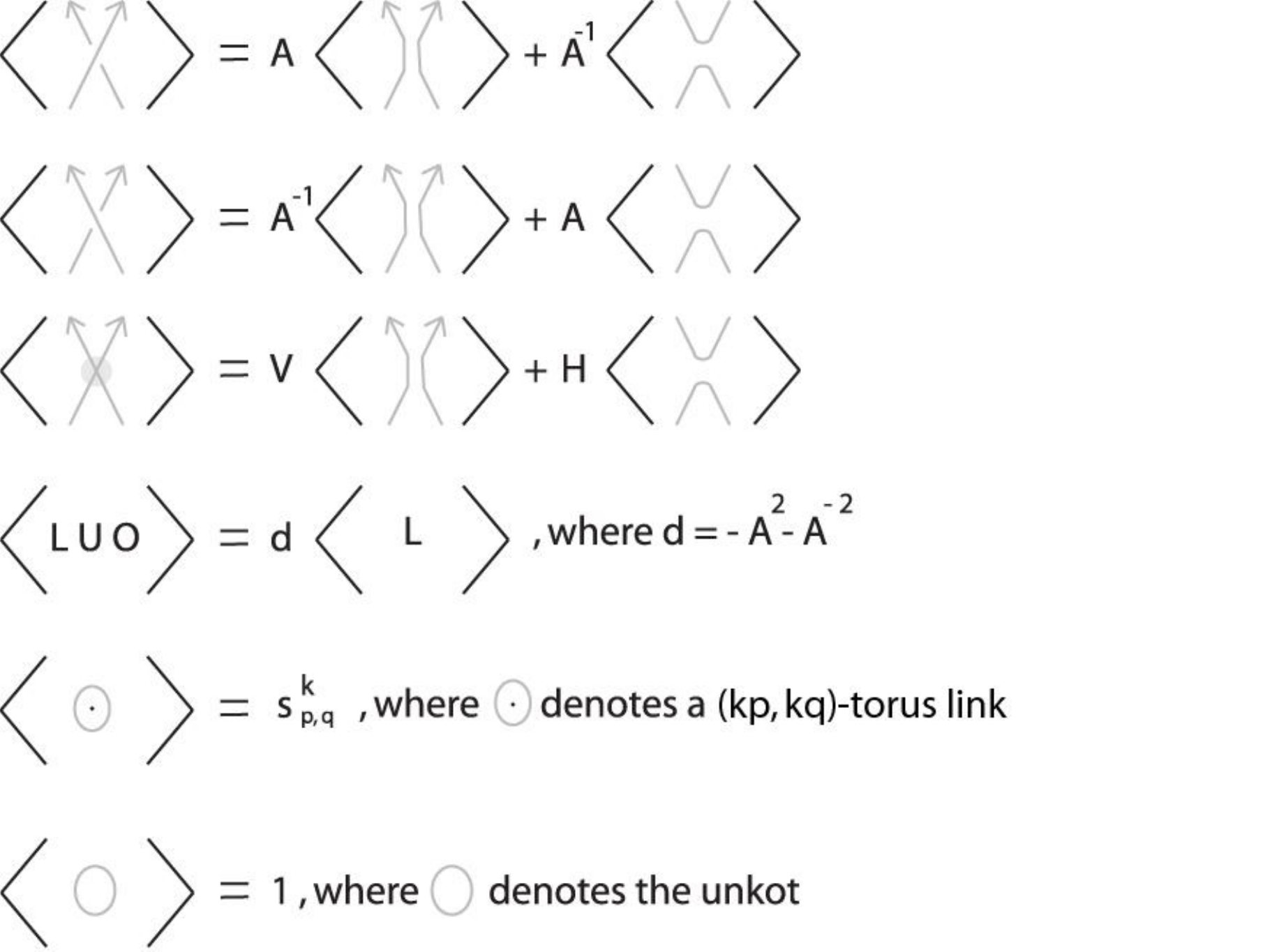}
\end{center}
\label{pkbstor}
\end{figure}
\end{definition}

\noindent  Before proceeding, we define a simplified version of the toroidal pseudo bracket polynomial that can  be obtained by introducing two new variables $x$ and $y$ in place of $s_{p,q}$ in the 5th rule, which can be useful in computations. More precisely: 

\begin{definition}\label{def:reduced-tor-bracket}
Let $K$ be an oriented toroidal pseudo link. The \textit{reduced toroidal pseudo bracket polynomial} of $K$, denoted $\langle K \rangle^r_{T^2}$ or simply $\langle K \rangle^r$, is a 5-variable Laurent polynomial in the ring $\mathbb{Z}\left[A^{\pm 1}, V, H, x^{\pm 1}, y \right]$, defined inductively by means of the same rules as for the polynomial $\langle K \rangle_{T^2}(A, V, H, s_{p,q})$, except for the fifth rule, in which $s_{p,q}$ is replaced by $x^p y^q$, for $p$ and $q$ pairs of coprime integers with $q$ non negative, or $(p, q) \in \{(1,0), (0, 1)\}$. 
We further define the {\it universal toroidal pseudo bracket polynomial} by substituting in the fifth rule $s_{p,q}^k$ by $s_{p,q,k}$.
\end{definition}

\begin{remark}
Since the precrossings are smoothened, the universal toroidal pseudo bracket polynomial recovers the Kauffman bracket skein module of toroidal pseudo links. Cf. \cite{BaPr} and references therein. 
\end{remark} 

Note that the last rule can only be applied to a toroidal state diagram that contains only a standard null-homotopic unknot. We also point out that two torus knot components of a toroidal pseudo link diagram, which have different slopes, will necessarily form crossings, which will be smoothened by the skein relations of Definition~\ref{kbkntt} (and Definition~\ref{def:reduced-tor-bracket} for the reduced case). So, a state diagram of the toroidal pseudo bracket polynomial can only contain a single torus knot or link. Therefore, together with the comment above Definition~\ref{kbkntt} we have that the toroidal pseudo bracket polynomial and the reduced one are well defined.

\smallbreak
In analogy to the planar and annular pseudo bracket polynomials of Definition~\ref{pkaufb} and Definition~\ref{pkaufbst}, we have the following:

\begin{theorem}\label{th:regular_invariance_tor}
The (universal) toroidal pseudo bracket polynomial and the reduced toroidal pseudo bracket polynomial are invariant under regular isotopy of oriented toroidal pseudo links.
\end{theorem}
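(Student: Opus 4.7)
The plan is to verify invariance under each generator of regular isotopy separately, following the strategy of Propositions~\ref{regular_invariance} and~\ref{annular_regular_invariance}. Regular isotopy on toroidal pseudo link diagrams is generated by (i) surface isotopy on $T^2$, (ii) the Reidemeister moves R2 and R3, and (iii) the pseudo Reidemeister moves PR2 and PR3. Since the three bracket variants (standard, universal, and reduced) share identical skein rules and differ only in their initial conditions, a single argument at the level of states will cover all three simultaneously.

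First, for the local moves R2, R3, PR2, PR3, the calculations of \cite[Proposition~5.5]{D} that underlie the annular case (Proposition~\ref{annular_regular_invariance}) carry over verbatim. Each such move is supported in a disk embedded in $T^2$, and the skein manipulations inside that disk are exactly those already verified in the planar and annular settings; no specialization of $H$ is required, so this step is routine.

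The substantive step is invariance under surface isotopy on $T^2$. Applying the skein rules of Definition~\ref{kbkntt} to every classical and pseudo crossing yields a weighted sum of states, each state being a disjoint union of simple closed curves on $T^2$. A surface isotopy of the diagram descends to an ambient isotopy of each resulting state on $T^2$, so it suffices to show that the coefficient assigned to a state depends only on the isotopy type of its underlying curve collection. The plan here is to invoke the classical fact that each essential simple closed curve on $T^2$ is isotopic to a unique $(p,q)$-torus knot subject to the normalization of Remark~\ref{rem:essentialpq}, together with the observation that two disjoint essential simple closed curves of slopes $(p_1,q_1)$ and $(p_2,q_2)$ have geometric intersection number $|p_1q_2 - p_2q_1|$, which vanishes exactly when the slopes coincide. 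Consequently, every state decomposes as $k$ null-homotopic components together with a single $(p,q)$-torus link on $\ell$ components, and its contribution $d^{k-1}s_{p,q}^{\ell}$, respectively $d^{k-1}s_{p,q,\ell}$ or $d^{k-1}(x^p y^q)^{\ell}$ in the universal and reduced variants, is unambiguously defined.

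The main obstacle, and the only nontrivial point, is justifying that no state can contain two disjoint essential components of distinct slopes; this is what makes the initial conditions of Definition~\ref{kbkntt} simultaneously exhaustive and mutually exclusive. The intersection-number argument above resolves this. Once it is in place, invariance under surface isotopy is immediate, and combining with the first step yields the theorem for all three polynomial variants.
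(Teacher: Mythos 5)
Your proposal is correct and follows essentially the same route as the paper: invariance under R2, R3, PR2, PR3 by locality of the moves and the annular-case computations of \cite[Proposition~5.5]{D} (with no specialization of $H$), while your surface-isotopy/well-definedness step --- that a state can contain essential curves of only one slope, so its value depends only on its isotopy type --- is precisely the discussion the paper places in the remarks just before the theorem. One trivial slip: when an essential $(p,q)$-torus link on $\ell\geq 1$ components is present, the $k$ null-homotopic circles contribute $d^{k}$ rather than $d^{k-1}$, which does not affect the argument.
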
 

\begin{proof}
The invariance under the Reidemeister moves R2, R3, and the pseudo Reidemeister moves PR2, PR3 follows by similar arguments as in the proof of Proposition~\ref{annular_regular_invariance} for the invariance under regular isotopy in the annular case, and by the locality of these moves. 
\end{proof}

\begin{remarks}\label{rem:inclusions-tor}
In  \cite{DLM3} we discuss inclusion relations of the pseudo knot theories of the various supporting manifolds involved. Based on these inclusion relations we now discuss  relations of the corresponding pseudo bracket polynomials.

\noindent (a) If a toroidal pseudo diagram contains no essential closed curves, by using the inclusion of a disc in $T^2$ (see left-hand illustration in Figure~\ref{planarannulartorus}), such toroidal diagrams can be viewed as planar  diagrams and resolve into states as in the planar case.

\begin{figure}[H] 
\begin{center} 
\includegraphics[width=6in]{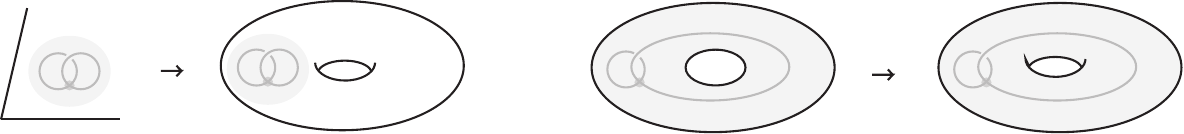} 
\end{center} 
\vspace{8pt}
\caption{Inclusion relations: a disc in the torus and an annulus in the torus.} 
\label{planarannulartorus} 
\end{figure} 

\noindent (b) By the same reasoning, if a toroidal pseudo diagram can be isotoped to one that contains no meridional windings but only longitudinal ones, by using the inclusion of an annulus in $T^2$ (see right-hand illustration in Figure~\ref{planarannulartorus}), such toroidal diagrams can be viewed as annular diagrams and resolves into states as in the annular case.

\smallbreak
\noindent  (c) In contrast, using the inclusion of the thickened torus in a 3-ball (see Figure~\ref{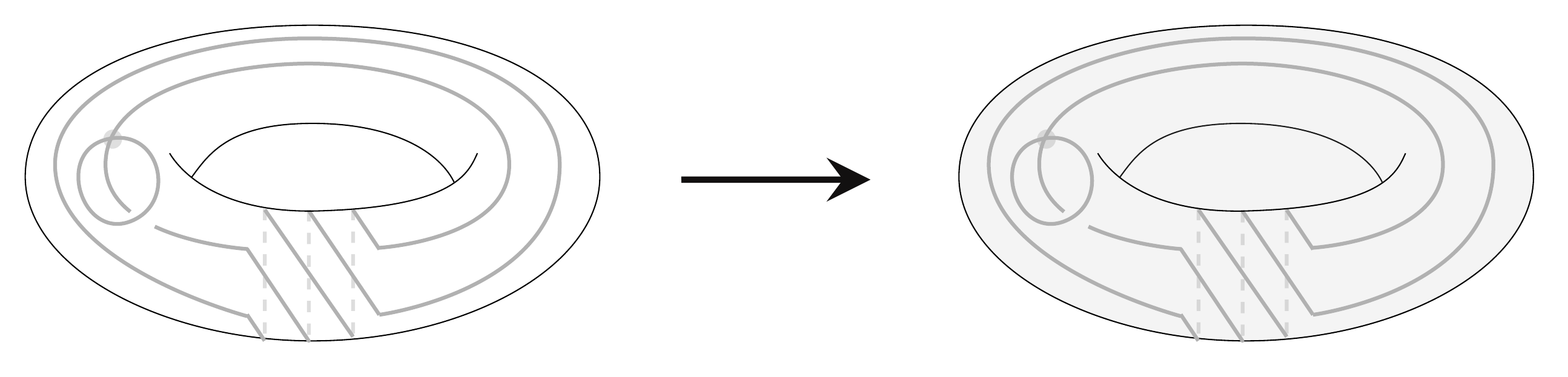}),  the toroidal pseudo bracket polynomial specializes to the planar pseudo bracket of Definition~\ref{pkaufb} for planar pseudo links. Therefore, a $(p,q)$-torus knot will be treated as a classical knot, so it will resolve further according to the rules of the classical bracket polynomial. In particular, an $(1,0)$-torus knot or a $(0,1)$-torus knot become null-homotopic, so  $s_{1,0}^k$ and $s_{0,1}^k$ will be substituted by $d^{k-1} = (-A^2-A^{-2})^{k-1}$. View also Figure~\ref{allinclusions}.

\begin{figure}[H] 
\begin{center} 
\includegraphics[width=3.8in]{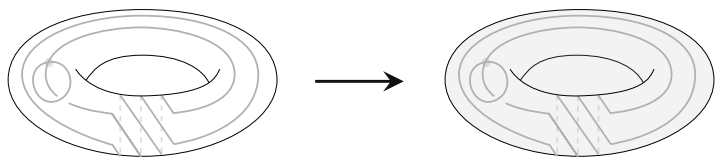} 
\end{center} 
\caption{Inclusion relation of the thickened torus in a 3-ball.} 
\label{inclusion-tor-disk.pdf} 
\end{figure} 

\noindent (d) By the same reasoning, using the inclusion of the thickened torus in the solid torus (see Figure~\ref{inclusion-ttor-st}),  the toroidal pseudo bracket polynomial specializes to the annular pseudo bracket of Definition~\ref{pkaufbst} for annular pseudo links. Therefore, a $(p,q)$-torus knot will be treated as an annular knot, so it will resolve further according to the rules of the annular  bracket polynomial. In particular, an $(1,0)$-torus knot becomes a null-homotopic unknot in the annulus, whilst a $(0,1)$-torus knot becomes an essential unknot in the annulus, so $s_{0,1}^k$ will be substituted by $s^k$. Similarly, in the reduced toroidal pseudo bracket polynomial we will have $(x^0 y^1)^k =s^k$. View also Figure~\ref{allinclusions}. 
\end{remarks}

\begin{figure}[H]
\begin{center}
\includegraphics[width=3.5in]{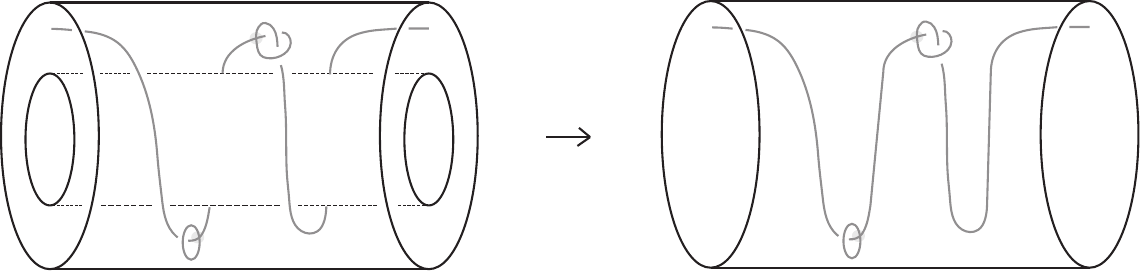}
\end{center}
\vspace{5pt}
\caption{Inclusion relation of the thickened torus in the solid torus.}
\label{inclusion-ttor-st}
\end{figure}

\begin{figure}[H] 
\begin{center} 
\includegraphics[width=5in]{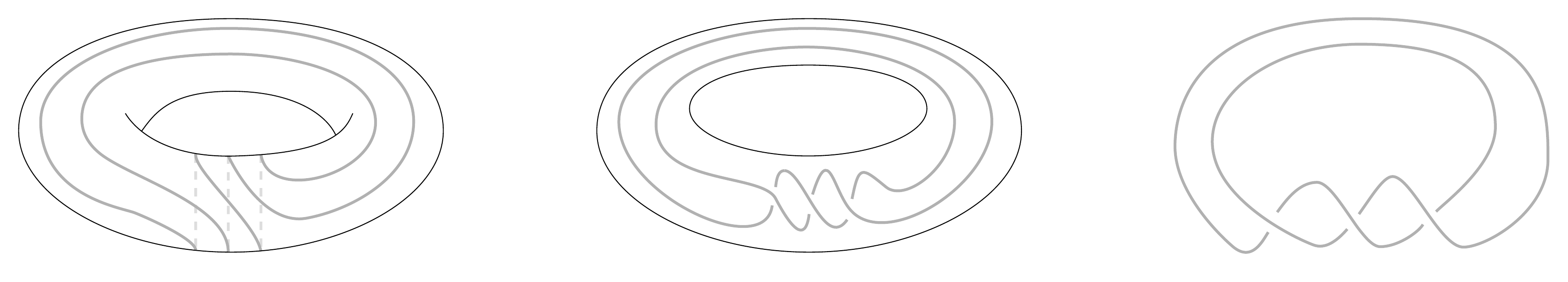} 
\end{center} 
\caption{A $(3,2)$-torus knot becomes an essential trefoil knot in the annulus and  a trefoil knot in the plane.} 
\label{allinclusions} 
\end{figure} 

\begin{remark}
Consider a torus link consisting of two different torus knot components, say a $(1,0)$-torus knot and a $(0,1)$-torus knot which intersect in a precrossing. Assigning orientations to the two components we observe that reversing the orientation of one component will result in different bracket polynomials, due to the fact that the roles of $V$ and $H$ in the third rule of the toroidal pseudo bracket are not interchangeable.  The situation is illustrated in Figure~\ref{reverseorientation}. The left-hand side toroidal pseudo link, $L_1$, is assigned the polynomial:
$$\langle L_1 \rangle_{T^2} = V \, s_{1,1} + H \, s_{-1,1},$$ 
\noindent whilst the right-hand side toroidal pseudo link, $L_2$, is assigned the polynomial: 
$$\langle L_2 \rangle_{T^2} = V \, s_{-1,1} + H \, s_{1,1}.$$ 
\noindent In the respective reduced versions: 
$$\langle L_1 \rangle^r_{T^2} = V x y + H x^{-1} y$$ 
\noindent and: 
$$\langle L_2 \rangle^r_{T^2} = V x^{-1} y + H x y.$$
\end{remark} 

\begin{figure}[H] 
\begin{center} 
\includegraphics[width=5.5in]{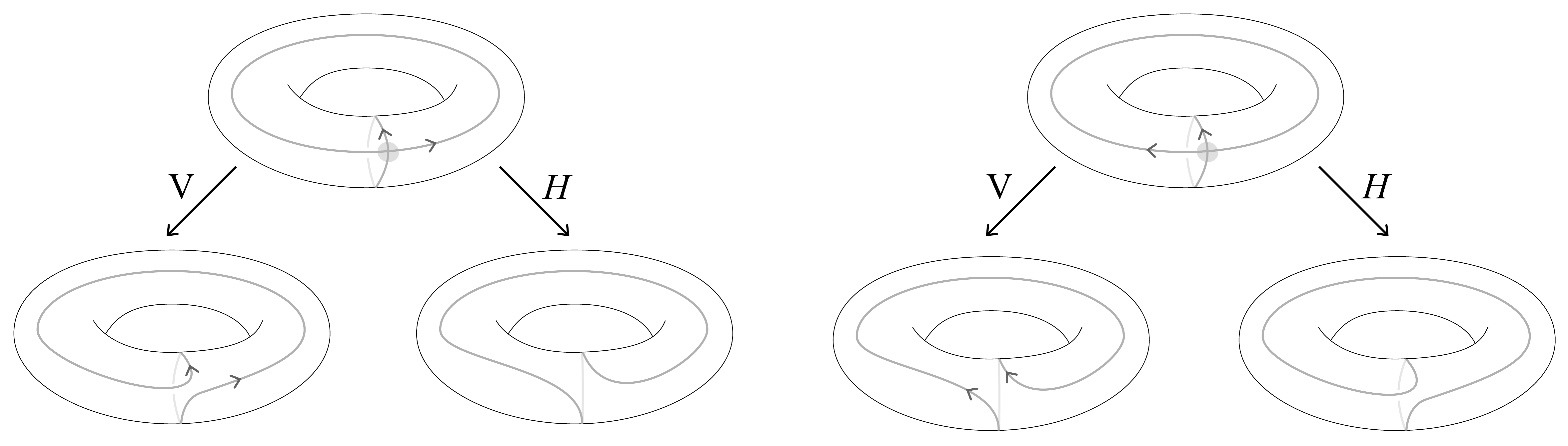} 
\end{center} 
\vspace{8pt}
\caption{The orientation sensitive skein analysis for the toroidal pseudo links $L_1$ and $L_2$.} 
\label{reverseorientation} 
\end{figure} 

\begin{example}\label{ex:toroidal}
Consider the two different embeddings of a pseudo trefoil knot in the thickened torus, whose diagrams are depicted in the top row of Figure~\ref{tortref}. The one on the top left side, denoted $K_l$, has the same skein tree is the same as in Figure~\ref{pseudo-annular} for the annular case due to the inclusion relation of the annulus in the torus (recall Remark~\ref{rem:inclusions-tor} (d)). Hence, we have: 
$$ \langle K_l \rangle_{T^2} = - V^2 s_{0,1}^2 A^3 - VHA^{-3} + 2VHA^{-1} + V^2 A^{-1} - V^2 s_{0,1}^2 A^{-1} - H^2 A^{-3} + VHA^{-5} $$

\noindent and the reduced version:
$$\langle K_l \rangle^r_{T^2} = - V^2 y^2 A^3 - VHA^{-3} + 2VHA^{-1} + V^2 A^{-1} - V^2 y^2 A^{-1} - H^2 A^{-3} + VHA^{-5}. $$

\begin{figure}[H]
\begin{center}
\includegraphics[width=4in]{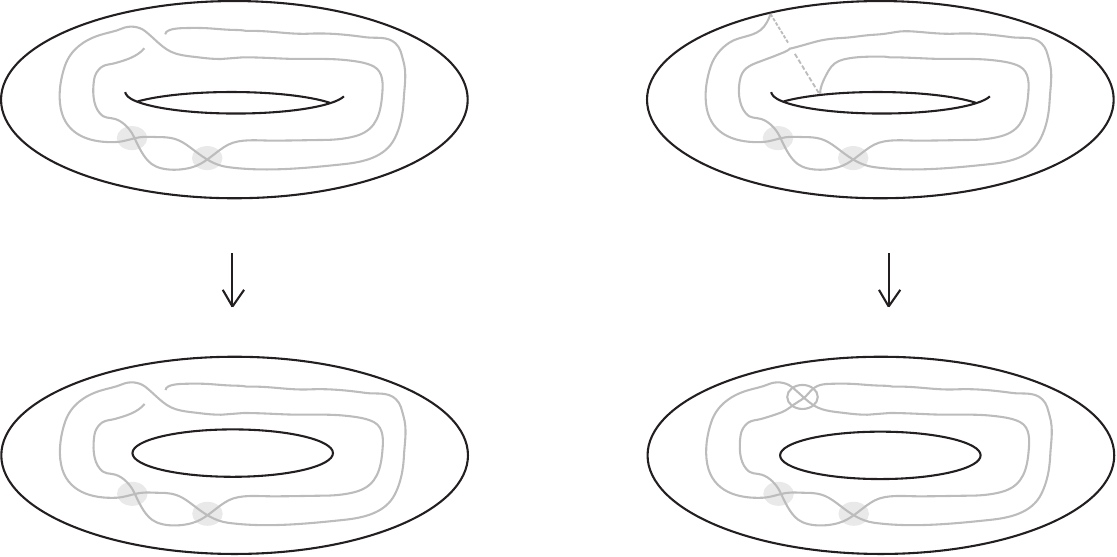}
\end{center}
\vspace{8pt}
\caption{Two toroidal pseudo trefoils projected on the annulus and the appearance of a virtual crossing (cf. \cite{DLM3}).}
\label{tortref}
\end{figure}

\noindent The pseudo trefoil on the top right side of the figure, denoted $K_r$, is not of annular type, so when projecting on the annulus, a {\it virtual crossing} will appear, as illustrated in Figure~\ref{tortref} bottom right, which is neither a real crossing  nor a precrossing (cf. \cite{LK2}). The use of virtual crossings in this context comes in handy (only) for illustration purposes, as discussed in \cite{DLM3}, in order to represent toroidal diagrams in the annulus or in the once punctured disc. We stress that the isotopy moves of virtual knot theory do not apply here. The skein tree analysis of $K_r$ is depicted in Figure~\ref{Virtual-Pseudo-toroidal}. 

\begin{figure}[H]
\begin{center}
\includegraphics[width=6in]{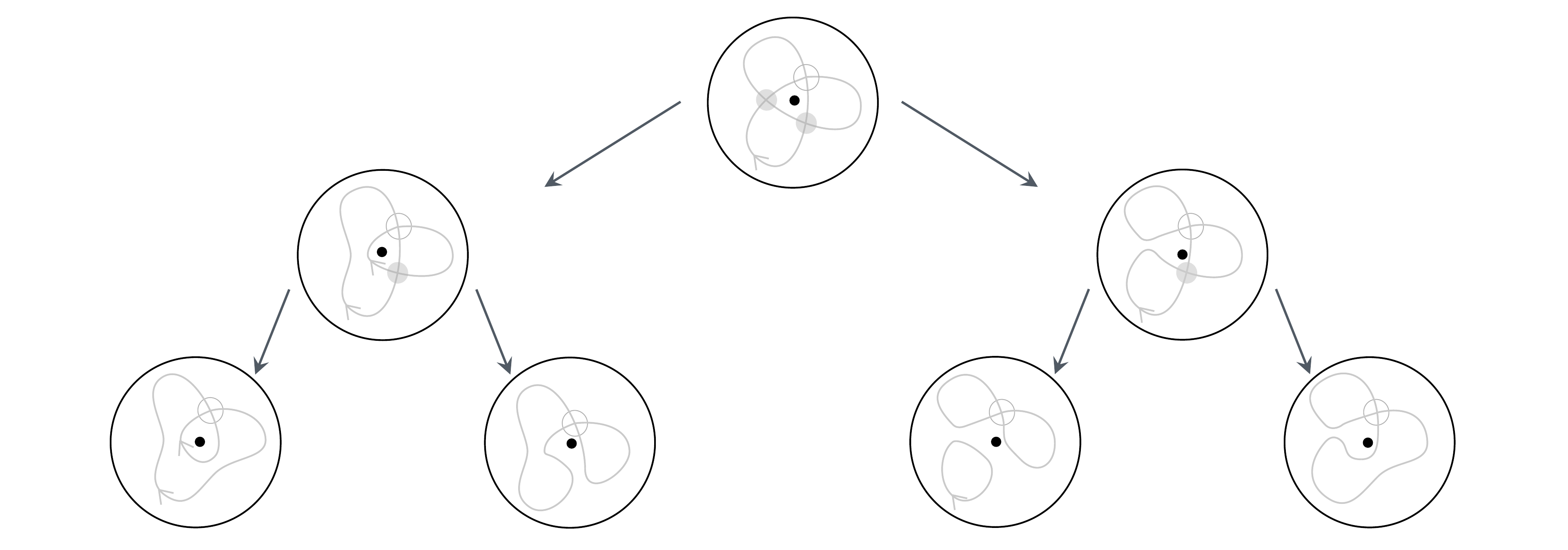}
\end{center}
\vspace{8pt}
\caption{The skein tree of the virtual toroidal pseudo trefoil depicted in Figure~\ref{tortref} top right.}
\label{Virtual-Pseudo-toroidal}
\end{figure}

\noindent So, we obtain the following:
$$\langle K_r \rangle_{T^2} = V^2 s_{1,2} + \big{(}VH (1 - A^2 - A^{-2}) + H^2\big{)} s_{1,0} $$

\noindent and the reduced version:
$$\langle K_r \rangle^r_{T^2} = V^2 x y^2 + \big{(}VH (1 - A^2 - A^{-2}) + H^2\big{)} x $$

\noindent where the rightmost state diagram consists of a $(1,2)$-torus knot and the three other state diagrams contain one meridional curve (in the third diagram we take out the isolated unknot by a factor $d$). 
\end{example}

\subsection{The ${\rm H}$-mixed pseudo bracket polynomial}

In this subsection, we consider the planar ${\rm H}$-mixed pseudo link approach to toroidal pseudo links (recall Subsection~\ref{sec:prel-toroidal}). In view of Theorem~\ref{Hmixedreid}, the toroidal pseudo bracket polynomial can be adapted to the setting of oriented ${\rm H}$-mixed pseudo links. Here the crossings of ${\rm H}$  as well as the mixed crossings are not subjected to skein relations, since ${\rm H}$ represents the thickened torus, so must remain fixed throughout. 

We now consider the torus links of type $(kp,kq)$, which form the states for the toroidal pseudo bracket polynomial of Definition~\ref{kbkntt}, and which we want to correspond to ${\rm H}$-mixed  links. Figure~\ref{Hmixedtorusknot} illustrates a $(3,2)$-torus knot represented in two ways as an ${\rm H}$-mixed link. The left-hand illustration of the figure uses the common representation of ${\rm H}$, while in the right-hand illustration ${\rm H}$ is represented in closed braid form. As we observe, in the left-hand illustration appear three moving crossings, which should be smoothened according to the bracket rules, a situation we can avoid using the representation of the $(3,2)$-torus knot in the right-hand illustration. 

\begin{figure}[H]
    \centering
    \includegraphics[width=4in]{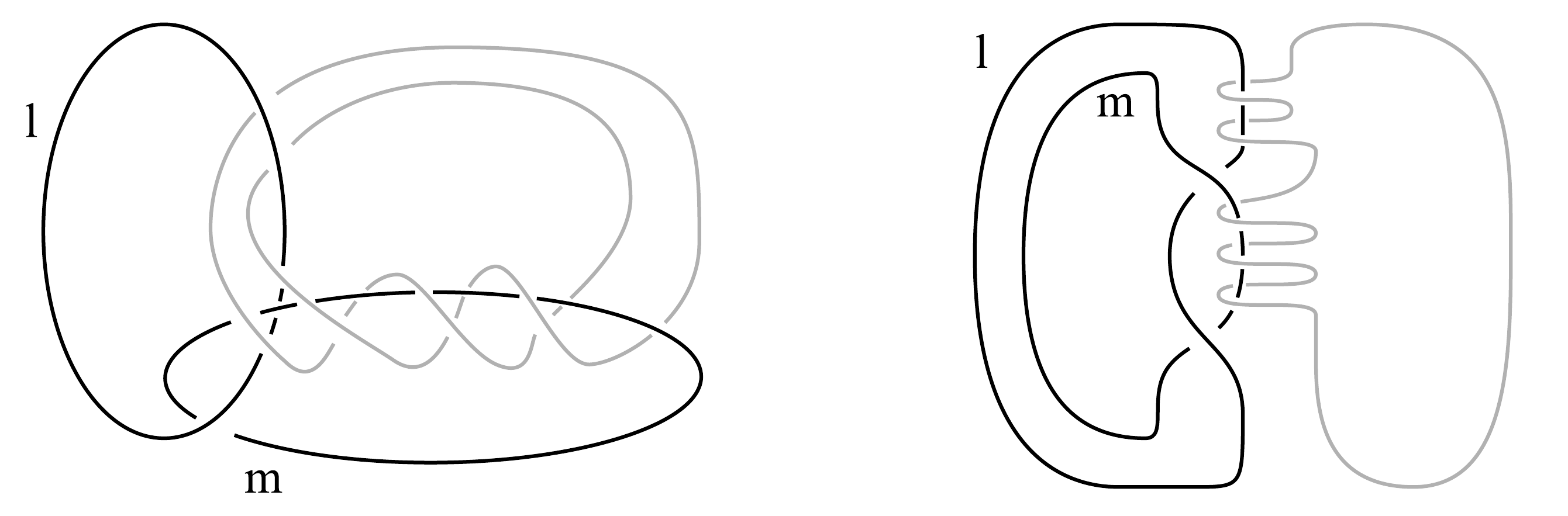}
    \vspace{8pt}
    \caption{ A $(3,2)$-torus knot represented in two ways as an ${\rm H}$-mixed link.}
    \label{Hmixedtorusknot}
\end{figure}

We are now ready to translate the toroidal pseudo bracket polynomial in the ${\rm H}$-mixed setting:

\begin{definition}\label{mix-pkaufbtt}\rm
Let $K$ be a toroidal oriented pseudo link diagram and let ${\rm H}\cup K$ the corresponding ${\rm H}$-mixed pseudo link diagram, with ${\rm H}$ being represented in closed braid form. The \textit{${\rm H}$-mixed pseudo bracket polynomial} of ${\rm H}\cup K$ is an infinite variable Laurent polynomial in the ring $\mathbb{Z}\left[A^{\pm 1}, V, H, s_{p,q} \right]$,  defined by means of the same inductive rules as the ones for the toroidal mixed pseudo bracket polynomial, except for the  $(kp,kq)$-torus link diagram in the fifth rule of Definition~\ref{kbkntt}, which is substituted by the corresponding ${\rm H}$-mixed link diagram, as illustrated in Figure~\ref{Hmixedtoruslink}. Also, $L$ in the fourth rule stands now for an oriented ${\rm H}$-mixed pseudo link diagram.  We further define the \textit{universal ${\rm H}$-mixed pseudo bracket polynomial} by substituting in the fifth rule $s_{p,q}^k$ by $s_{p,q,k}$.
\end{definition}

\begin{figure}[H]
    \centering
    \includegraphics[width=2in]{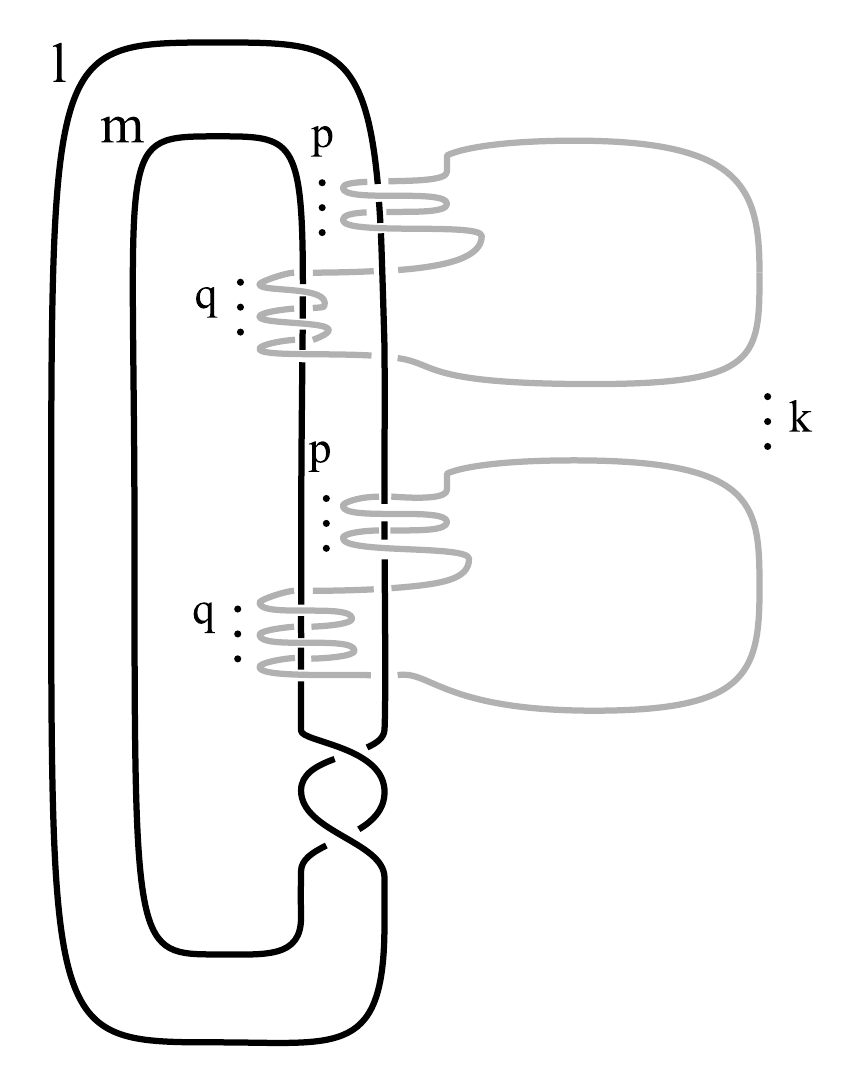}
    \caption{ An ${\rm H}$-mixed link diagram corresponding to the $(kp,kq)$-torus link.}
    \label{Hmixedtoruslink}
\end{figure}

\begin{remark} 
Diagrams of the form of Figure~\ref{Hmixedtoruslink} have been used in the braid approach to the skein modules of various 3-manifolds.  Note that these diagrams comprise only a subset of the complete theory of ${\rm H}$-mixed links, where different torus links may be present in a state diagram. For details cf. \cites{D2} and  references therein. 
\end{remark}

We may also define a simplified version of the ${\rm H}$-mixed pseudo bracket polynomial that can  be obtained by introducing two new variables $x$ and $y$ in place of $s_{p,q}$ in the 5th rule, which can prove handy in computations. More precisely: 

\begin{definition}\label{def:reduced-H-bracket}
Let ${\rm H}\cup K$ be an oriented ${\rm H}$-mixed pseudo link diagram, with ${\rm H}$ being represented in closed braid form. The \textit{reduced ${\rm H}$-mixed pseudo bracket polynomial} of ${\rm H}\cup K$ is a 5-variable Laurent polynomial in the ring $\mathbb{Z}\left[A^{\pm 1}, V, H, x^{\pm 1}, y \right]$, defined inductively by means of the same rules as for the ${\rm H}$-mixed pseudo bracket polynomial, except for the fifth rule, in which $s_{p,q}$ is replaced by $x^p y^q$, for $p$ and $q$ pairs of coprime integers with $q$ non negative, or $(p, q) \in \{(1,0), (0, 1)\}$.
\end{definition}

In analogy now to the annular case (Theorem~\ref{th:an-mix-in}), we then have for planar ${\rm H}$-mixed pseudo links the following:

\begin{theorem}\label{th:th-mix-in}
The (universal) ${\rm H}$-mixed and the reduced ${\rm H}$-mixed pseudo bracket polynomials are invariant under regular isotopy of oriented ${\rm H}$-mixed pseudo links and are equivalent to the  (universal) toroidal and the reduced toroidal pseudo bracket polynomials, respectively, for toroidal pseudo links. 
\end{theorem}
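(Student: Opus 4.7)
The plan is to mirror the proof of Theorem~\ref{th:an-mix-in} closely, decomposing the verification into invariance under each family of Reidemeister moves listed in Theorem~\ref{Hmixedreid}, and then establishing the claimed equivalence with the toroidal bracket through a state-by-state correspondence. I would cover three families of moves separately: the classical and pseudo Reidemeister moves acting on the moving part, the mixed moves MR2, MR3, MPR3, and the genuinely new mixed R3 move of Figure~\ref{mr3} that is specific to the ${\rm H}$-mixed setting.

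For the moves R2, R3, PR2, and PR3 acting on the moving part of an ${\rm H}$-mixed diagram, I would invoke the same local skein computations used to establish Theorem~\ref{th:regular_invariance_tor}, since the skein rules governing moving (pre)crossings are identical in both theories and these moves are confined to a disc disjoint from ${\rm H}$. For the mixed moves MR2, MR3, and MPR3, I would reproduce the argument of Theorem~\ref{th:an-mix-in}: MR2 is undetectable by the bracket because there is no skein rule for mixed crossings, and MR3 and MPR3 follow by first applying the appropriate skein rule to the unique moving (pre)crossing and then absorbing the result through pairs of MR2 moves, exactly as in Figure~\ref{kf1}.

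The genuinely new ingredient is the mixed R3 move of Figure~\ref{mr3}, where a moving strand slides across one of the two crossings of ${\rm H}$. Since the crossings of ${\rm H}$ themselves are never smoothened, the bracket treats them as fixed local data; applying the skein to the adjacent moving (pre)crossing reduces the move, state by state, to sliding a simple moving arc past the ${\rm H}$-crossing, which is achieved by a short sequence of MR2 moves. This step needs to be run through separately for the classical skein and for the pseudo skein, but in each case it is a purely local manipulation of the fixed/moving interaction.

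Finally, for the equivalence with the toroidal polynomials, I would match skein trees term by term using Theorem~\ref{Hmixedreid}: the moving skein rules are identical on both sides, so the only thing left to check is the fifth (initial-condition) rule, namely that every state arising from an ${\rm H}$-mixed skein resolution can be brought into the closed braid form of Figure~\ref{Hmixedtoruslink} with a uniquely determined pair $(p,q)$, and with the same multiplicity $k$, so that the assignment of $s_{p,q}^k$ (respectively $s_{p,q,k}$ and $x^{pk}y^{qk}$) agrees with the one used in Definitions~\ref{kbkntt} and~\ref{def:reduced-tor-bracket}. The main obstacle I expect is precisely this last bijection of states: one must verify that an essential multicurve in the thickened torus, represented as a sublink of an ${\rm H}$-mixed state diagram, has an unambiguous $(p,q)$-type after being isotoped into closed braid form around ${\rm H}$, which ultimately reduces to the classification of essential simple closed multicurves in $T^2$ recalled in Remark~\ref{rem:essentialpq}, transported across the closed-braid presentation of the thickened torus complement.
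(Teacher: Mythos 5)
Your decomposition and most of your argument coincide with the paper's proof: invariance under R2, R3, PR2, PR3 by locality of the moving skein rules, MR2 undetectable because mixed crossings carry no skein rule, MR3 and MPR3 by smoothing the single moving (pre)crossing and cancelling with MR2 as in Figure~\ref{kf1}, and the equivalence with the (reduced, universal) toroidal bracket via Theorem~\ref{Hmixedreid}. Your extra care about the state correspondence --- checking that a state has a well-defined $(p,q)$-type and multiplicity once brought to the closed braid form of Figure~\ref{Hmixedtoruslink} --- elaborates on what the paper dispatches as ``immediate''; that is a legitimate refinement rather than a divergence.

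The one step that fails as written is your treatment of the mixed R3 move of Figure~\ref{mr3}. That move involves a single moving arc and the two fixed strands of ${\rm H}$: the only crossings in the local picture are the fixed crossing of ${\rm H}$ and two mixed crossings, so there is no ``adjacent moving (pre)crossing'' to which a skein rule could be applied, and there is nothing to run separately for the classical and the pseudo skein. Moreover, the reduction you propose --- sliding the moving arc past the ${\rm H}$-crossing by a short sequence of MR2 moves --- is not available: MR2 only creates or cancels a pair of mixed crossings of a moving arc with a single fixed strand and cannot carry an arc across the fixed crossing of ${\rm H}$; if it could, mixed R3 would be redundant and would not be listed as an independent move in Theorem~\ref{Hmixedreid}. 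The correct argument, which is the paper's, is simpler: since neither mixed crossings nor the fixed crossings of ${\rm H}$ are subject to any skein rule, both sides of a mixed R3 move resolve into states that differ only by this same move, hence represent the same torus link type in the complement of ${\rm H}$ and receive the same variable under the fifth rule of Definition~\ref{mix-pkaufbtt}; so the move is undetectable by the bracket, exactly like MR2. With that step replaced, your proof matches the paper's.
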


\begin{proof}
We consider the restriction of the theory of planar pseudo links to the subset of  ${\rm H}$-mixed pseudo links. Then, by Proposition~\ref{regular_invariance}, the (reduced) ${\rm H}$-mixed pseudo bracket polynomial is invariant under the classical Reidemeister moves R2 and R3 and under the pseudo Reidemeister moves PR2 and PR3. Regarding now the mixed Reidemeister moves MR2, MR3, MPR3 and mixed R3 (recall Figures~\ref{mpr} and~\ref{mr3}), since there is no rule for smoothing mixed crossings, the moves MR2 and the mixed R3 moves are undetectable by the (reduced) ${\rm H}$-mixed pseudo bracket, so it remains invariant. Further, invariance under the moves MR3 and MPR3 follows by the same arguments as in the annular case (recall Figure~\ref{kf1}). 

Finally, the equivalence of the (reduced) ${\rm H}$-mixed pseudo bracket polynomial for ${\rm H}$-mixed pseudo links to the (reduced) toroidal pseudo bracket polynomial for toroidal pseudo links follows immediately by Theorem~\ref{Hmixedreid}.
\end{proof}

\subsection{Jones-type analogues}

Similarly to the case of planar and annular pseudo links, we may normalize the toroidal and the ${\rm H}$-pseudo bracket polynomials in order to satisfy the Reidemeister moves R1 and PR1 and obtain  invariants for toroidal pseudo links and their corresponding pseudo links in the thickened torus resp. ${\rm H}$-mixed pseudo links. For this, we first define:

\begin{definition} \label{def:toroidal_writhe}
The \textit{writhe} of an oriented toroidal pseudo link diagram $L$, denoted as $w(L)$, is defined as the number of positive crossings minus the number of negative crossings of $L$ (recall Figure~\ref{sign}), while the precrossings do not contribute to the writhe. Furthermore, the ${\rm H}$-\textit{writhe} of an oriented ${\rm H}$-mixed pseudo link diagram ${\rm H}\cup L$, denoted $w({\rm H}\cup L)$, is defined as the number of positive crossings minus the number of negative crossings of $L$ (recall Figure~\ref{sign}), while the precrossings, the mixed crossings and the fixed crossings do not contribute to the ${\rm H}$-writhe. 
\end{definition}

We now have:

\begin{theorem}\label{th:tor-bracket}
Let $L$ be an oriented toroidal pseudo link diagram. The  (universal) normalized toroidal pseudo bracket polynomial is an infinite variable Laurent polynomial in the ring $\mathbb{Z}\left[A^{\pm 1},V,s_{p,q}\right]$ $\left( \text{resp.}\, \mathbb{Z}\left[A^{\pm 1},V,s_{p,q,k}\right]\right)$, defined as:
$$P_L(A,V,s_{p,q})\ =\ (-A^{-3})^{w(L)}\, \langle L \rangle_{T^2}$$
\noindent $\left( \text{resp.}\, P_L(A,V,s_{p,q,k})\right)$ where $\langle L \rangle_{T^2}$ denotes the (universal) toroidal pseudo bracket polynomial of $L$ for $H = 1-Vd$, and it is an isotopy invariant of toroidal pseudo links and their corresponding pseudo links in the thickened torus. 

\smallbreak
\noindent Similarly, the  normalized reduced toroidal pseudo bracket polynomial is 
 a 4-variable Laurent polynomial in the ring $\mathbb{Z}\left[A^{\pm 1}, V, x^{\pm 1}, y \right]$,  defined as: 
 
$$P^r_L(A,V,x,y)\ =\ (-A^{-3})^{w(L)}\, \langle L \rangle^r_{T^2}$$
\noindent where $\langle L \rangle^r_{T^2}$ denotes the reduced toroidal pseudo bracket polynomial of $L$ for $H = 1-Vd$, and it is an isotopy invariant of toroidal pseudo links and their corresponding pseudo links in the thickened torus. 

\smallbreak
\noindent Furthermore, for ${\rm H}\cup L$ an oriented ${\rm H}$-mixed pseudo link diagram corresponding to the toroidal pseudo link diagram $L$, the (universal) normalized ${\rm H}$-mixed pseudo bracket polynomial is an infinite variable Laurent polynomial in the ring $\mathbb{Z}\left[A^{\pm 1},V,s_{p,q}\right]$, defined as:
$$P_{{\rm H}\cup L}(A,V,s_{p,q})\ =\ (-A^{-3})^{w({\rm H}\cup L)}\, \langle {\rm H}\cup L \rangle,
$$
\noindent $\left( \text{resp. } P_{{\rm H}\cup L}(A,V,s_{p,q,k}) \text{ in the ring } \mathbb{Z}\left[A^{\pm 1},V,s_{p,q,k}\right]\right)$ where $\langle{\rm H}\cup L \rangle$ denotes the (universal) ${\rm H}$-mixed pseudo bracket polynomial of ${\rm H}\cup L$ for $H = 1-Vd$, and it is an isotopy invariant of ${\rm H}$-mixed pseudo links and their corresponding spatial ${\rm H}$-mixed pseudo links.

\smallbreak
\noindent Similarly, the  normalized reduced  ${\rm H}$-mixed pseudo bracket polynomial is a 4-variable Laurent polynomial in the ring $\mathbb{Z}\left[A^{\pm 1}, V, x^{\pm 1}, y \right]$,  defined as: 
$$P^r_{{\rm H}\cup L}(A,V,x,y)\ =\ (-A^{-3})^{w({\rm H}\cup L)}\, \langle {\rm H}\cup L \rangle^r,
$$
\noindent where $\langle{\rm H}\cup L \rangle^r$ denotes the reduced ${\rm H}$-mixed pseudo bracket polynomial of ${\rm H}\cup L$ for $H = 1-Vd$, and it is an isotopy invariant of ${\rm H}$-mixed pseudo links and their corresponding spatial ${\rm H}$-mixed pseudo links.
\end{theorem}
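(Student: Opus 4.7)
The plan is to reduce everything to what has already been established and then handle only the moves R1 and PR1 by the same classical arguments used in the planar and annular cases. By Theorem~\ref{th:regular_invariance_tor}, the (universal/reduced) toroidal pseudo bracket $\langle L\rangle_{T^2}$ is already invariant under the regular isotopy moves R0, R2, R3, PR2, PR3, and by Theorem~\ref{th:th-mix-in} the corresponding ${\rm H}$-mixed polynomials are additionally invariant under the mixed Reidemeister moves MR2, MR3, MPR3 and the mixed R3 move. The writhes of Definition~\ref{def:toroidal_writhe} count only classical crossings of the moving part, so they too are unchanged by any of these moves; the factor $(-A^{-3})^{w(L)}$ (resp.\ $(-A^{-3})^{w({\rm H}\cup L)}$) is therefore harmless for them, and it suffices to verify that the normalization repairs the failure under R1 and PR1.

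For R1 I would apply the local skein rule of Definition~\ref{kbkntt} to a small kink, obtaining the standard Kauffman identity $\langle\text{R1-kink}\rangle = (-A^{\pm 3})\,\langle\text{smoothed}\rangle$. Since an R1 move on a classical crossing changes the writhe by $\pm 1$, multiplication by $(-A^{-3})^{w(L)}$ precisely cancels the $(-A^{\pm 3})$ factor introduced by the bracket. This computation is entirely local and transfers verbatim to the toroidal, reduced toroidal, ${\rm H}$-mixed, and reduced ${\rm H}$-mixed settings. For PR1 I would imitate the argument of \cite{HD} recalled in Subsection~\ref{sec:planar-Jones}: resolving the precrossing of the PR1 configuration by the third rule of Definition~\ref{kbkntt} yields $V\cdot(\text{one-loop state})+H\cdot(\text{smoothed state})$, and the one-loop state carries an extra null-homotopic unknot contributing a factor $d=-A^2-A^{-2}$. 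Equating this expression with the straight-arc diagram then forces $Vd+H=1$, i.e.\ the specialization $H=1-Vd$; since PR1 is again local and since the writhe ignores precrossings, no additional writhe correction is needed.

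The main obstacle I anticipate is bookkeeping at the level of the terminal torus-link states of the skein tree: one must be sure that applying R1 or PR1 does not alter which torus link eventually arises in each terminal state, so that the $s_{p,q}$-coefficients (resp.\ $x^py^q$, $s_{p,q,k}$) match on the two sides of the local identity before the specialization $H=1-Vd$ is imposed. This is guaranteed by the locality of R1 and PR1 together with the observation after Definition~\ref{def:reduced-tor-bracket} that a single state diagram contains at most one torus link, so the local skein identities pass through unchanged to every terminal state. Finally, the equivalences between the planar mixed and the toroidal normalized polynomials follow from Theorem~\ref{th:th-mix-in} once one notes $w(L)=w({\rm H}\cup L)$, which is immediate from Definition~\ref{def:toroidal_writhe} because only crossings of the moving part contribute to either writhe.
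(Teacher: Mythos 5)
Your proposal is correct and follows essentially the same route as the paper's proof: regular isotopy invariance is quoted from Theorem~\ref{th:regular_invariance_tor} and Theorem~\ref{th:th-mix-in}, the writhe factor $(-A^{-3})^{w(L)}$ handles R1 by the standard Kauffman kink computation, and PR1 forces the specialization $H=1-Vd$ exactly as in \cite{HD}, with the ${\rm H}$-mixed cases treated analogously. Your additional remarks (locality ensuring the terminal torus-link states are unaffected, and the equality $w(L)=w({\rm H}\cup L)$) merely make explicit details the paper leaves implicit.
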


\begin{proof}
As in the case of the planar pseudo bracket  (Theorem~\ref{th:pl-bracket}) and the annular pseudo bracket polynomials (Theorem~\ref{th:an-bracket}), multiplying the (universal) toroidal pseudo bracket  with the writhe correction factor $\left( -A^{-3}\right) ^{w(L)}$ ensures invariance under Reidemeister moves R1, while retaining invariance under regular isotopy, since the writhe remains invariant under regular isotopy. For the invariance under the move PR1, the specialization $H = 1-Vd$ must be forced, as argued in \cite{HD} and discussed in Subsection~\ref{sec:planar-Jones}. So, the polynomials $P$ and $P^r$ are isotopy invariants of toroidal pseudo links, analogous to the normalized pseudo bracket for planar and annular pseudo links. 

The invariance of the normalized (reduced) ${\rm H}$-mixed pseudo bracket polynomial follows analogously from the definition of  (reduced) ${\rm H}$-mixed pseudo bracket (Theorem~\ref{th:tor-bracket}), the definition of the ${\rm H}$-writhe (Definition~\ref{def:toroidal_writhe} and Theorem~\ref{th:pl-bracket} for planar pseudo links. 
\end{proof}

\begin{definition}
\sloppy Applying the variable substitution $A= t^{-1/4}$ in the polynomials $P_L(A, V, s_{p,q,k})$, $P_L(A, V, s_{p,q})$ and $P^r_L(A, V, x,y)$ we obtain the equivalent {\it (universal) toroidal pseudo Jones polynomials}, respectively denoted by $P_L(t^{-1/4}, V, s_{p,q,k})$, $P_L(t^{-1/4}, V, s_{p,q})$ and $P^r_L(t^{-1/4}, V, x,y)$, which are isotopy invariants of toroidal pseudo links and their corresponding pseudo links in the thickened torus. Similarly, the same variable substitution in the equivalent normalized ${\rm H}$-mixed pseudo bracket polynomials $P_{{\rm H}\cup L}(A,V,s_{p,q,k})$, $P_{{\rm H}\cup L}(A,V,s_{p,q})$ and the reduced version $P^r_{{\rm H}\cup L}(A,V,x,y)$ gives rise to the \textit{${\rm H}$-mixed pseudo Jones polynomials}, denoted $P_{{\rm H}\cup L}(t^{-1/4},V,s_{p,q,k})$, $P_{{\rm H}\cup L}(t^{-1/4},V,s_{p,q})$ and $P^r_{{\rm H}\cup L}(t^{-1/4},V,x,y)$, isotopy invariants of ${\rm H}$-mixed pseudo links and their spatial analogues. 
\end{definition}

\begin{remarks}
\,\\
(a) As in the case of planar and annular pseudo links, the  variable $V$ witnesses the presence of pseudo crossings. In the absence of precrossings, the polynomials $P_L(A, V, s_{p,q,k})$, $P_L(A, V, s_{p,q})$, $P^r_L(A, V, x,y)$, $P_{{\rm H}\cup L}(A,V,s_{p,q,k})$, $P_{{\rm H}\cup L}(A,V,s_{p,q})$ and $P^r_{{\rm H}\cup L}(A,V,x,y)$ specialize to the analogues of the normalized  bracket (or the Jones polynomial for $A= t^{-1/4}$) for links in the thickened torus (for other versions of the bracket and Jones polynomials for classical links in thickened surfaces, we refer to \cites{Boden2019, Zenkina}). 
\smallbreak
\noindent (b) The variables $s_{p,q}$ (resp. $s_{p,q,k}$) in the (universal) toroidal pseudo bracket resp. the (universal) ${\rm H}$-mixed pseudo bracket (and variables $x,y$ in the reduced versions)   remain unchanged in the normalized  polynomials $P_L(A, V, s_{p,q,k})$, $P_L(A, V, s_{p,q})$, $P^r_L(A, V, x,y)$, $P_{{\rm H}\cup L}(A,V,s_{p,q,k})$, $P_{{\rm H}\cup L}(A,V,s_{p,q})$ and $P^r_{{\rm H}\cup L}(A,V,x,y)$, and they distinguish essential toroidal pseudo links from non-essential ones (i.e. ones that can be isotoped in an inclusion disc). Clearly, for non-essential toroidal pseudo links, the above normalized polynomials specialize to the corresponding planar one, $P_L(A, V)$.
\smallbreak
\noindent (c) The variables $s_{p,q}$ (resp. $s_{p,q,k}$)  in the normalized  (universal) polynomials $P_L(A, V, s_{p,q,k})$, $P_L(A, V, s_{p,q})$ and $P_{{\rm H}\cup L}(A,V,s_{p,q,k})$, $P_{{\rm H}\cup L}(A,V,s_{p,q})$  (and variables $x,y$ in the reduced versions $P^r_L(A, V, x,y)$ and $P^r_{{\rm H}\cup L}(A,V,x,y)$)  can also distinguish essential toroidal pseudo links from annular ones.  
\end{remarks}

\end{document}